\documentclass[12pt]{amsart}
\usepackage[margin=1.15in]{geometry}

\usepackage{amsmath, amscd, amssymb}
\usepackage{mathrsfs}
\usepackage{amsfonts}
\usepackage{latexsym,epsfig}
\usepackage{amsmath, amscd, amsthm}
\usepackage[pdftex]{color}
\usepackage[colorlinks,citecolor=blue,linkcolor=red]{hyperref}
\usepackage{overpic}




\newtheorem{thm}{Theorem}[section]

\newtheorem{lemma}[thm]{Lemma}
\newtheorem{prop}[thm]{Proposition}
\newtheorem{ques}[thm]{Question}
\newtheorem{conj}[thm]{Conjecture}
\theoremstyle{definition}

\newtheorem{example}[thm]{Example}

\newtheorem{claim}[thm]{Claim}


\newcommand{\Z}{\mathbb{Z}}

\newcommand{\N}{\mathbb{N}}



\DeclareMathOperator{\lcm}{lcm}

\newcommand{\la}{\langle}
\newcommand{\ra}{\rangle}

\newcommand{\p}{\partial}


\definecolor{amethyst}{rgb}{0.6, 0.4, 0.8}

\newcommand{\hide}[1]{}


\title{Hyperbolic groups that are not commensurably coHopfian}
\author{Emily Stark, Daniel Woodhouse }
\date{ \today }
\address{Department of Mathematics\\
Technion - Israel Institute of Technology \\
Haifa 32000 \\
Israel }
\email{emily.stark@campus.technion.ac.il, woodhouse.da@campus.technion.ac.il}

\thanks{The first author was supported by the Azrieli Foundation and was supported in part at the Technion by a Zuckerman Fellowship. The second author was supported by the Israel Science Foundation (grant 1026/15).}

\begin{document}
 
\begin{abstract}
  Sela proved every torsion-free one-ended hyperbolic group is coHopfian. We prove there exist torsion-free one-ended hyperbolic groups that are not commensurably coHopfian. In particular, we show that the fundamental group of every simple surface amalgam is not commensurably coHopfian. 
\end{abstract}

\maketitle

\section{Introduction}

  Hyperbolic groups, introduced by Gromov in his seminal essay~\cite{gromov}, form a broad family of finitely presented groups satisfying a coarse notion of negative curvature.
  The large scale geometry reflected in the definition has a variety of algebraic implications; for example, hyperbolic groups satisfy the Tits alternative and each of Dehn's decision problems -- the word, conjugacy, and isomorphism problems~\cite{selaIsomorphism, DahmaniGuirardel} -- are solvable. 
  A major development in the understanding of (torsion free) hyperbolic groups was the body of work developed by Rips and Sela, each independently and in partnership.
  The techniques they developed involved studying the action of hyperbolic groups on $\mathbb{R}$-trees and then developing a JSJ decomposition -- a canonical splitting of a hyperbolic group along two-ended subgroups.

    A group is {\it coHopfian} if it is not isomorphic to any of its proper subgroups. Sela~\cite{sela97}, building on work of Rips--Sela~\cite{RipsSela94}, proved a torsion-free hyperbolic group is coHopfian if and only if it is freely indecomposable. Thus, by the ``easy'' direction of Stallings' Theorem~\cite{stallings}, a torsion-free one-ended hyperbolic group is coHopfian.
    In his thesis, Moioli~\cite[Theorem 1.0.8]{Moioli} generalized this statement to prove every one-ended hyperbolic group is coHopfian.
    A group $\Gamma$ is {\it commensurably coHopfian} if no finite-index subgroup of $\Gamma$ is isomorphic to an infinite-index subgroup of $\Gamma$. 
    As we explain below, this variation of coHopficity has its own merits as, unlike the classical notion, it is a commensurability invariant.
    Strebel~\cite{Strebel77} proved that the infinite-index subgroups of a Poincar\'{e} duality group have strictly smaller cohomological dimension than the group. Thus, Poincar\'{e} duality groups are commensurably coHopfian. 
    In particular, fundamental groups of closed hyperbolic manifolds are commensurably coHopfian.
    Following the result of Sela, it was natural to ask if one ended hyperbolic groups are commensurably coHopfian.

    In this paper, we exhibit one-ended hyperbolic groups that are not commensurably coHopfian, answering a question of Whyte on Bestvina's Problem list~\cite[(Whyte, Q. 1.12)]{bestvinaprob} and also asked by Kapovich~\cite[Section 5]{kapovich12}.
    
    \begin{thm} \label{thm:main_thm}
      There exist one-ended hyperbolic groups that are not commensurably coHopfian.
    \end{thm}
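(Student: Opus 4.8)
The plan is to prove the sharper statement announced in the abstract, of which Theorem~\ref{thm:main_thm} is the special case: if $X$ is a \emph{simple surface amalgam} --- a compact $2$-complex obtained from $n\ge 3$ compact surfaces $\Sigma_1,\dots,\Sigma_n$, each with a single boundary circle and with $\chi(\Sigma_i)<0$, by gluing their boundary circles to a single circle $c$ --- then $\Gamma:=\pi_1 X$ is one-ended, hyperbolic, and not commensurably coHopfian. Hyperbolicity is immediate from a piecewise-hyperbolic structure: give each $\Sigma_i$ a hyperbolic metric with totally geodesic boundary of a common length $\ell$ and metrize $c$ with length $\ell$; away from $c$ the complex is locally $\operatorname{CAT}(-1)$, and at a point of $c$ its link is a circle of length $n\pi\ge 3\pi>2\pi$, so $X$ is locally $\operatorname{CAT}(-1)$ and $\Gamma$ is hyperbolic. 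Since $\Gamma$ is torsion-free, one-endedness amounts (by Stallings) to $\Gamma$ not being a nontrivial free product, which holds because $\Gamma$ is the amalgam of the free groups $\pi_1\Sigma_i$ along the infinite cyclic group $\langle c\rangle$, and $\langle c\rangle$ is carried by the boundary of each $\Sigma_i$ and hence is a free factor of no vertex group; equivalently, $\partial_\infty\widetilde X$ is connected when $n\ge 3$.

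To see $\Gamma$ is not commensurably coHopfian I would exhibit a finite-index subgroup $\Lambda\le\Gamma$ together with an injection $\psi\colon\Lambda\hookrightarrow\Gamma$ whose image has infinite index. The subgroup $\Lambda$ comes from a self-similar finite cover: fix a suitable odd $d$, and for each $i$ choose a connected degree-$d$ cover $\widehat\Sigma_i\to\Sigma_i$ whose boundary is a single circle wrapping $d$ times around $c$ (such a cover exists because one can let $\pi_1\Sigma_i$ act transitively on $d$ points with the boundary word acting by a $d$-cycle, using that the boundary word lies in the commutator subgroup and $d$ is odd). Gluing the $\widehat\Sigma_i$ along their common boundary circle yields a degree-$d$ cover $\widehat X\to X$ which is again a simple surface amalgam, with $n$ pieces of genus $\tfrac{d(2g_i-1)+1}{2}$ and a central curve $\widehat c$ mapping to $c$ with degree $d$; set $\Lambda:=\pi_1\widehat X$. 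The naive attempt to map $\widehat X$ back into $X$ --- sending each $\widehat\Sigma_i$ to $\Sigma_i$ with $\partial\widehat\Sigma_i\mapsto c^d$ --- cannot have infinite-index image: a $\pi_1$-injective map of one one-holed surface into another sending the boundary to a power of the boundary becomes, after capping the boundaries, a $\pi_1$-injective map of closed surfaces, hence a finite covering, so such a map is a finite cover of $\Sigma_i$. The re-embedding must therefore be \emph{transverse} to the $\langle c\rangle$-splitting.

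So instead I would embed $\Lambda=\pi_1\widehat X$, presented as the amalgam of the free groups $\pi_1\widehat\Sigma_i$ over $\langle\widehat c\rangle$, by choosing a single \emph{loxodromic} element $\gamma\in\Gamma$ lying in the commutator subgroup and, for each $i$, an injective homomorphism $\phi_i\colon\pi_1\widehat\Sigma_i\hookrightarrow\Gamma$ sending the boundary word of $\widehat\Sigma_i$ to $\gamma$, with infinite-index image and with the images $\phi_i(\pi_1\widehat\Sigma_i)$ pairwise meeting exactly in $\langle\gamma\rangle$; these assemble to $\psi$. Concretely, $\gamma$ must be realized as a product of $\tfrac{d(2g_i-1)+1}{2}$ commutators inside free subgroups of $\Gamma$ of the appropriate rank, in $n$ mutually ``independent'' ways --- this is where the higher-genus pieces of $\widehat X$ are accommodated: near the rigid gluing curve $c$ each page of $\Gamma$ is exactly some $\Sigma_j$, but along a loxodromic $\gamma$ the complementary structure is flexible enough to carry a ``fattened'' amalgam. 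Taking $\gamma$ loxodromic guarantees that no $\phi_i$ is conjugated into a single vertex group $\pi_1\Sigma_j$ (which would force finite index there).

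Granting such $\gamma$ and pages, one then checks that $\psi$ is injective by the normal-form theorem for the graph-of-groups decomposition of $\Lambda$ (each $\phi_i$ is injective, $\phi_i(\pi_1\widehat\Sigma_i)\cap\langle\gamma\rangle=\langle\gamma\rangle$ with $\gamma$ the boundary word, and the pages overlap only in $\langle\gamma\rangle$), and that $\psi(\Lambda)$ has infinite index in $\Gamma$ because its intersection with every vertex group of the $\langle c\rangle$-splitting is of infinite index, which is incompatible with finite index. Then $\Lambda$ is finite-index in $\Gamma$ and is isomorphic via $\psi$ to the infinite-index subgroup $\psi(\Lambda)$, so $\Gamma$ is not commensurably coHopfian, and Theorem~\ref{thm:main_thm} follows. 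The main obstacle is precisely the construction in the previous paragraph: choosing $\gamma$ loxodromic with enough independent commutator-product expressions of the lengths dictated by the genera of the pieces of $\widehat X$ (adjusting $d$ among admissible values as needed), arranging the pages to meet exactly in $\langle\gamma\rangle$, and thereby ensuring $\psi$ is injective with infinite-index image. That is where the real work lies.
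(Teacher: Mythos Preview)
Your proposal has a genuine gap: the central construction---a loxodromic $\gamma\in\Gamma$ together with $n$ injections $\phi_i\colon\pi_1\widehat\Sigma_i\hookrightarrow\Gamma$ each sending the boundary word to $\gamma$, whose images pairwise meet exactly in $\langle\gamma\rangle$---is never carried out, and you yourself flag it as ``where the real work lies.'' Without it you have a strategy, not a proof. The intersection condition in particular is delicate, and nothing you have written indicates how to arrange or verify it; nor is it clear that the commutator-length constraints on $\gamma$ imposed by the various genera of the $\widehat\Sigma_i$ can be satisfied simultaneously and compatibly.

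More to the point, you are making the problem much harder than necessary. The paper's argument is purely topological and avoids any such algebraic acrobatics: the idea you are missing is to build \emph{two} finite covers $X_1,X_2\to X$, not one, arranged so that $X_1$ sits inside $X_2$ as a $\pi_1$-injective proper subcomplex (indeed a retract). Then $\pi_1 X_1$ is simultaneously a finite-index subgroup of $\Gamma$ (via the cover $X_1\to X$) and an infinite-index subgroup (via the inclusion $\pi_1 X_1\hookrightarrow\pi_1 X_2\le\Gamma$). For Theorem~\ref{thm:main_thm} the paper takes $X$ to be the amalgam of three one-holed tori; $X_1$ is the degree-$3$ cover consisting of three one-holed genus-two surfaces along a single curve (this is your $\widehat X$ with $d=3$), and $X_2$ is a degree-$4$ cover built from degree-$2$ covers $\Sigma_i''$ (two-holed tori) glued at one boundary circle, with copies of $\Sigma_j,\Sigma_k$ attached at the other. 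Since a one-holed genus-two surface decomposes as a two-holed torus glued to a one-holed torus, one reads off the embedding $X_1\subset X_2$ directly from the picture. The same two-cover trick, with more bookkeeping on Euler characteristics, handles arbitrary simple surface amalgams in the paper's Section~\ref{sec:ssa}.
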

    
    Our proof of Theorem~\ref{thm:main_thm} is topological. 
    A {\it simple surface amalgam} is the union of a finite collection of at least three surfaces with negative Euler characteristic and precisely one boundary component each and which have their boundary components identified. These spaces have been studied in~\cite{malone, stark, danistarkthomas, starkwoodhouse}.
    Such spaces are one-ended and Gromov hyperbolic.
    Hyperbolicity is an immediate consequence of the Bestvina-Feighn combination theorem~\cite{bestvinafeighn92}, since free groups are amalgamated along malnormal subgroups.
    Moreover, by assigning each surface a suitable Reimannian metric, every simple surface amalgam admits a metric with a CAT(-1) universal cover (see~\cite[Theorem~II.5.4]{bridsonhaefliger} or Section 4 in~\cite{stark}).
    We exhibit in Section~\ref{sec:ex} a simple surface amalgam $X$ and two finite covers $X_1 \rightarrow X$ and $X_2 \rightarrow X$ so that the space $X_1$ $\pi_1$-injectively embeds in the space $X_2$ and such that $\pi_1 (X_1)$ embeds as an infinite-index subgroup of $\pi_1(X_2)$. 
    See Figure~\ref{figure:coverTrick}. 
     The construction given in Section~\ref{sec:ex} does not immediately extend to simple surface amalgams in which the subsurfaces have different Euler characteristics. Nonetheless, we prove the following in Section~\ref{sec:ssa}.
    
    \begin{thm}
     The fundamental group of any simple surface amalgam is not commensurably coHopfian. 
    \end{thm}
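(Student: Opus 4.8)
The overall method is that of Section~\ref{sec:ex}: to see that a group is not commensurably coHopfian it is enough to exhibit a finite-index subgroup that is isomorphic to an infinite-index subgroup, and since this property is a commensurability invariant (as observed in the introduction) it suffices to establish it for the fundamental group of \emph{some} finite cover $\widehat X$ of the given simple surface amalgam $X$. So the plan has three steps. First, pass to a well-chosen finite cover $\widehat X\to X$ all of whose subsurfaces have the same Euler characteristic. Second, run the construction of Section~\ref{sec:ex} on $\widehat X$ to obtain finite covers $X_1\to\widehat X$ and $X_2\to\widehat X$ together with a $\pi_1$-injective map $f\colon X_1\to X_2$ whose image has infinite index in $\pi_1(X_2)$. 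Third, conclude: since $\pi_1(X_2)$ has finite index in $\pi_1(\widehat X)$, the subgroup $f_*\pi_1(X_1)$ has infinite index in $\pi_1(\widehat X)$, while it is isomorphic to the finite-index subgroup $\pi_1(X_1)$; hence $\pi_1(\widehat X)$, and therefore $\pi_1(X)$, is not commensurably coHopfian. The second step is already carried out in Section~\ref{sec:ex} when the subsurfaces all have the same Euler characteristic, so the genuinely new ingredient is the first.

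For the first step, write the subsurfaces of $X$ as $\Sigma_1,\dots,\Sigma_k$ (with $k\ge 3$), glued along a circle $c$, put $n_i=-\chi(\Sigma_i)$, and fix a common multiple $N$ of $n_1,\dots,n_k$ with $N/n_i\ge 2$ for every $i$, e.g.\ $N=2\lcm(n_1,\dots,n_k)$. The homology class of $\partial\Sigma_i$ in $H_1(\Sigma_i;\Z)$ is zero or twice a primitive class, so there is a surjection $\pi_1(\Sigma_i)\to\Z/(N/n_i)$ that is trivial on $\partial\Sigma_i$; the corresponding connected degree-$(N/n_i)$ cover $\widetilde\Sigma_i\to\Sigma_i$ has Euler characteristic $-N$, and $\partial\Sigma_i$ lifts in it to $N/n_i$ disjoint circles each mapped homeomorphically onto $c$. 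Now take $n_i$ disjoint copies of $\widetilde\Sigma_i$ over each $\Sigma_i$: these contribute, in all, exactly $N$ lifted boundary circles over each $\Sigma_i$. Glue these to $N$ new circles $c_1,\dots,c_N$, one coming from each $\Sigma_i$ to each $c_j$, arranging the gluing pattern so that the resulting space is connected. The result is a connected degree-$N$ covering $\widehat X\to X$ — around each $c_j$ there sit exactly $k$ surface sheets, one over each $\Sigma_i$, matching the local picture of $X$ along $c$ — and $\widehat X$ is a surface amalgam all of whose subsurfaces have Euler characteristic $-N$. Note that $\widehat X$ is no longer a \emph{simple} surface amalgam (unless the $n_i$ are all equal), since $c$ has been unwrapped into the circles $c_1,\dots,c_N$.

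It remains to perform the ``cover trick'' of Section~\ref{sec:ex} (see Figure~\ref{figure:coverTrick}) on $\widehat X$: because all of its subsurfaces share the Euler characteristic $-N$, one wraps part of its central curve to build a cover $X_2\to\widehat X$ with ``extra room,'' into which a second cover $X_1\to\widehat X$ embeds $\pi_1$-injectively with infinite-index image. The step I expect to be the main obstacle is exactly the one flagged in the introduction: verifying that this construction really does survive the passage from a simple surface amalgam to the surface amalgam $\widehat X$. Making this precise requires understanding which finite covers of a (not necessarily simple) surface amalgam admit $\pi_1$-injective maps to one another with infinite-index image — essentially a description of its $\pi_1$-injective sub-surface-amalgams — and it is in this description that the equality of the Euler characteristics of the subsurfaces of $\widehat X$ plays an essential role.
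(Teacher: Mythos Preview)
Your proposal reduces the problem but does not close it. The first step is fine: passing to a cover $\widehat X$ whose subsurfaces all have Euler characteristic $-N$ works as you describe. The gap is the second step. As you yourself note, $\widehat X$ is no longer a \emph{simple} surface amalgam: it has $N$ singular circles, and its subsurfaces have varying numbers of boundary components (namely $N/n_i$ for the pieces over $\Sigma_i$). The construction of Section~\ref{sec:ex} is specific to a single singular circle and subsurfaces with one boundary component each --- the degree-$3$ and degree-$4$ covers are built by wrapping around that one circle, and the embedding $X_1\hookrightarrow X_2$ uses that each $\Sigma_i'$ has a single boundary. None of this survives the passage to $\widehat X$ without further argument, and you have not supplied that argument; you have only flagged it as ``the main obstacle.'' So the proposal is a plausible outline, not a proof.

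The paper proceeds differently and avoids this obstacle entirely. Rather than equalising Euler characteristics, it passes only to the degree-$2$ double cover (so $\widehat X$ has exactly two singular circles and each $\widehat\Sigma_i$ has two boundary components), and then confronts the unequal $\widehat\chi_i$ head-on via a numerical claim (Claim~\ref{claim:ints}): one solves a system of linear equations in integers $D,d,d_1,\dots,d_k$ so that each $\Sigma_i'$ in the degree-$D$ cover $X'$ decomposes into three explicit subsurfaces whose Euler characteristics are $(d+d_i)\widehat\chi_i$, $2d_i\widehat\chi_{i-1}$, and $d_i\widehat\chi_i$. The middle piece, crucially, has the Euler characteristic needed to cover $\widehat\Sigma_{i-1}$ rather than $\widehat\Sigma_i$; this cyclic shift is what lets one attach the extra surfaces in $X''$ and obtain a genuine cover of $\widehat X$. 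In short: you try to kill the asymmetry by a preliminary cover and then re-run Section~\ref{sec:ex}, but the preliminary cover introduces new combinatorial complexity you do not handle; the paper keeps the combinatorics simple and absorbs the asymmetry into a linear-algebra computation.
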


    Bowditch~\cite{bowditch} proved that if $G$ is a one-ended hyperbolic group that is not Fuchsian, then there is a canonical graph of groups decomposition of $G$, called the JSJ decomposition of $G$, with edge groups that are two-ended and vertex groups of three types: two-ended; maximally hanging Fuchsian; and quasi-convex rigid vertex groups not of the first two types. For background, see \cite{scottwall,serre,guirardellevitt}. 
    We conjecture that for a one-ended hyperbolic group the commensurably coHopfian property is related to the existence of maximal hanging Fuchsian vertex groups in the JSJ decomposition of the group over two-ended subgroups. 
    
    \begin{conj} \label{conj}
     Let $\Gamma$ be a one-ended hyperbolic group that is not Fuchsian. If $\Gamma$ is not commensurably coHopfian, then its JSJ decomposition contains a maximal hanging Fuchsian vertex group. Moreover, if the JSJ decomposition of $\Gamma$ only contains maximal hanging Fuchsian vertex groups and 2-ended vertex groups, then $\Gamma$ is not commensurably coHopfian.
    \end{conj}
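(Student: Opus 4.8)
The plan is to prove the two implications separately: the sufficient condition of the ``moreover'' clause by a covering-space construction generalizing that of Section~\ref{sec:ex}, and the necessary condition (that failure of commensurable coHopficity forces a maximal hanging Fuchsian vertex) by a rigidity argument organized around the canonical JSJ tree. Throughout I would fix the Bass--Serre tree $T$ on which $\Gamma$ acts with quotient the JSJ graph of groups, and exploit that the JSJ decomposition is canonical, hence invariant under automorphisms and compatible with passage to finite-index subgroups: if $H \le \Gamma$ has finite index, then $H$ acts cocompactly on $T$ and the induced splitting is the JSJ of $H$, with each vertex group a finite-index subgroup of a conjugate of a vertex group of $\Gamma$. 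In particular $H$ has a maximal hanging Fuchsian vertex if and only if $\Gamma$ does.

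For the ``moreover'' clause I would realize $\Gamma = \pi_1(Y)$ as the fundamental group of a graph of spaces $Y$ in which each maximal hanging Fuchsian vertex is a compact hyperbolic $2$-orbifold with boundary and each $2$-ended vertex or edge is modelled by a circle (or a suitable quotient in the presence of torsion). The surface-with-boundary vertices are the source of flexibility, exactly as in the simple surface amalgam case: each such orbifold admits finite covers in which a single boundary curve lifts to several boundary curves, and these lifts may be reglued along the $2$-ended edge spaces with varying degrees. Mimicking the construction of Section~\ref{sec:ex}, I would build two finite covers $Y_1 \to Y$ and $Y_2 \to Y$ together with a $\pi_1$-injective embedding $Y_1 \hookrightarrow Y_2$ whose image omits a subsurface, so that the finite-index subgroup $\pi_1(Y_1) \le \Gamma$ is isomorphic to an infinite-index subgroup of $\pi_1(Y_2)$, and hence of $\Gamma$. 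The bookkeeping obstacle is matching covering degrees along every $2$-ended edge simultaneously, which I expect to handle by passing to a common degree (a least common multiple of local degrees) across all edges, as is done for simple surface amalgams of unequal Euler characteristic in Section~\ref{sec:ssa}.

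For the forward implication I would argue the contrapositive: if the JSJ of $\Gamma$ has only $2$-ended and rigid vertex groups, then $\Gamma$ is commensurably coHopfian. Let $H \le \Gamma$ be of finite index and $\phi \colon H \to \Gamma$ injective with image $K$; I must show $[\Gamma : K] < \infty$. By the canonicity recalled above, the abstract JSJ of $K \cong H$ is a finite graph of groups with only $2$-ended and rigid vertex groups, each a finite-index subgroup of a vertex group of $\Gamma$. Meanwhile $K$ acts on the JSJ tree $T$ of $\Gamma$, and the first key lemma is that the minimal $K$-invariant subtree, with its induced edge-and-vertex splitting, realizes the canonical JSJ of $K$. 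The second key lemma is a relative coHopficity of the rigid vertices --- available from the defining rigidity of Bowditch's quasi-convex non-elementary vertex groups, in the spirit of Sela --- which forces $K$ to meet each rigid stabilizer it touches in a finite-index subgroup. A cocompactness count on $T$ then shows $K \backslash T$ is finite with finite-index vertex and edge groups, whence $[\Gamma : K] < \infty$.

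The main obstacle is the forward implication, and within it the first key lemma. A priori the inclusion $K \hookrightarrow \Gamma$ need not carry JSJ pieces to JSJ pieces, and one must rule out both folding of $T$ and the possibility that $K$ meets a rigid vertex group in an infinite-index subgroup that is itself rigid. The absence of maximal hanging Fuchsian vertices is exactly what should prevent this: a hanging Fuchsian vertex is the one place admitting a $\pi_1$-injective, infinite-index self-embedding that preserves JSJ type --- a surface mapping onto a proper subsurface of one of its covers --- which is the very mechanism driving the ``moreover'' clause. Making this dichotomy precise, namely that a rigid vertex group admits no infinite-index self-similar embedding compatible with its peripheral $2$-ended structure, is the crux; I expect it to demand the full rigidity theory underlying Bowditch's JSJ together with a careful analysis of the edge-incidence data.
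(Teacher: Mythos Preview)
The statement you are attempting to prove is a \emph{conjecture} in the paper, not a theorem; the authors do not offer a proof of either implication. Indeed, immediately after stating it they warn that ``highly distorted subgroups may be counterexamples to Conjecture~\ref{conj}, so a quasi-convexity assumption may be required,'' and the summary table in Section~\ref{sec:summary} records both implications as open problems: it is unknown whether every group whose JSJ has only two-ended and maximal hanging Fuchsian vertices fails to be commensurably coHopfian, and it is unknown whether a group whose JSJ has only two-ended and rigid vertices (or trivial JSJ) must be commensurably coHopfian. The paper establishes only special cases: Theorem~\ref{thm:ssa} handles simple surface amalgams, and Examples~\ref{ex:sc} and~\ref{prop:rsc} treat groups whose rigid vertices are closed hyperbolic $3$-manifold groups.

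Your sketch for the forward implication contains a genuine gap that you yourself flag. The step ``relative coHopficity of the rigid vertices --- available from the defining rigidity of Bowditch's quasi-convex non-elementary vertex groups'' is not available in general: a rigid vertex group in Bowditch's JSJ is merely one that does not split further over two-ended subgroups relative to its incident edge groups; nothing in that definition prevents it from containing an isomorphic copy of a finite-index subgroup of itself at infinite index. In the paper's Example~\ref{prop:rsc} this step works only because the rigid vertices are fundamental groups of closed hyperbolic $3$-manifolds, so Strebel's theorem and Mostow rigidity supply exactly the relative coHopficity and volume control you need. Absent such extra structure, the ``first key lemma'' (that the minimal $K$-subtree of $T$ realizes the JSJ of $K$) and the ``second key lemma'' are both unproven, and the authors' remark about distorted subgroups suggests the embedding $\phi$ need not respect the JSJ tree at all. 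Your outline for the ``moreover'' clause is plausible in spirit, but the bookkeeping you defer---matching covering degrees simultaneously across an arbitrary bipartite JSJ graph with possibly many two-ended vertices of varying valence, and handling orbifold vertices with torsion---goes well beyond the LCM trick of Section~\ref{sec:ssa}, which treats a single amalgamating curve.
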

    
    In this paper, the embeddings constructed are quasi-isometric embeddings and are surely not representative. Indeed, highly distorted subgroups may be counterexamples to Conjecture~\ref{conj}, so a quasi-convexity assumption may be required. 
        
    In Section~\ref{sec:mixed} we present two examples of one-ended hyperbolic groups whose JSJ decomposition contains both maximal hanging Fuchsian and rigid vertex groups and so that one group is commensurably coHopfian and the other is not. We summarize our examples and open problems in Section~\ref{sec:summary}. 
  
  \subsection*{Quasi-isometrically coHopfian}
  The quasi-isometrically coHopfian condition is a related coarse notion for metric spaces.
  A metric space $X$ is {\it quasi-isometrically coHopfian} if every quasi-isometric embedding of $X$ into itself is a quasi-isometry. 
  This property holds for certain coarse PD(n) spaces~\cite{KapovichKleiner05}\cite[Section 3]{KapovichLukyanenko12} and has been studied for certain Gromov hyperbolic spaces~\cite{Merenkov10} and non-uniform lattices in rank-one semisimple Lie groups~\cite{KapovichLukyanenko12}.
  The infinite-index embeddings we give in Section~\ref{sec:ex} and Theorem~\ref{thm:ssa} are retractions and therefore are quasi-isometric embeddings that are not quasi-isometries.
  These are the only known examples, as far as we know, of one-ended hyperbolic groups that are not quasi-isometrically coHopfian.
  
  \subsection*{Motivation of the terminology}
  
  Kapovich~\cite{kapovich12} uses the term \emph{weakly} coHopfian instead of commensurably coHopfian.  We abandon this terminology, since the property is not weaker than the coHopfian property: every one-ended hyperbolic group is coHopfian by the theorem of Sela, but not every one-ended hyperbolic group is commensurably coHopfian as shown here. However, in general, the commensurably coHopfian property defined in this paper is not a stronger condition than the coHopfian property. For example, the integers $\Z$ are commensurably coHopfian, but not coHopfian. 
  The adjective {\it commensurably} is justified by the fact that being commensurably coHopfian is an abstract commensurability invariant, which follows from the lemma below. The coHopfian property, on the other hand, is not an abstract commensurability invariant by work of Cornulier~\cite[Appendix A]{Cornulier16}.
  
  \begin{lemma}
   If $H \leq G$ is a finite-index subgroup, then $H$ is commensurably coHopfian if and only if $G$ is commensurably coHopfian.
  \end{lemma}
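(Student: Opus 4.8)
The plan is to establish the contrapositive in both directions: $G$ is not commensurably coHopfian if and only if $H$ is not. I first rephrase the definition: a group $\Gamma$ fails to be commensurably coHopfian exactly when there exist a finite-index subgroup $A \le \Gamma$ and an injective homomorphism $\phi \colon A \to \Gamma$ with $[\Gamma : \phi(A)] = \infty$, since then $A \cong \phi(A)$, and conversely any such isomorphism onto an infinite-index subgroup gives such a $\phi$. Throughout I will use two elementary facts about the finite-index inclusion $H \le G$: intersecting a finite-index subgroup of $G$ with $H$ (more generally, intersecting two finite-index subgroups, or taking the preimage of a finite-index subgroup under a homomorphism) again produces a finite-index subgroup; and for any $K \le H$ one has $[G:K] < \infty$ if and only if $[H:K] < \infty$, so having infinite index is detected identically in $H$ and in $G$.

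For the direction ``$H$ not commensurably coHopfian $\Rightarrow$ $G$ not commensurably coHopfian'', take a witnessing finite-index subgroup $A \le H$ and injective homomorphism $\psi \colon A \to H$ with $\psi(A)$ of infinite index in $H$. Then $A$ has finite index in $G$ by transitivity of finite index, $\psi$ is equally well an injective homomorphism into $G$, and $\psi(A) \le H$ has infinite index in $G$ by the second elementary fact; so $(A, \psi)$ witnesses that $G$ is not commensurably coHopfian.

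For the converse, start from a finite-index subgroup $A \le G$ and an injective $\phi \colon A \to G$ with $\phi(A)$ of infinite index in $G$, and manufacture a witness inside $H$ by shrinking $A$ twice. Put $A_1 = A \cap H$: it has finite index in $A$ and in $H$, so $\phi(A_1)$ has finite index in $\phi(A)$ and hence still has infinite index in $G$. Then put $A_2 = A_1 \cap \phi^{-1}(H)$: since $\phi^{-1}(H)$ has finite index in $A$, the subgroup $A_2$ has finite index in $H$; moreover $\phi(A_2) \le H$, and $\phi(A_2)$ has finite index in $\phi(A_1)$, hence infinite index in $G$, hence (second fact) infinite index in $H$. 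Thus $\phi|_{A_2} \colon A_2 \to H$ witnesses that $H$ is not commensurably coHopfian. The only point needing care — and the one I would call the crux — is precisely this double shrinking: one passes to a finite-index subgroup once to move the domain inside $H$ and a second time to force the image into $H$, and each time one must check, via the facts above, that an infinite-index image cannot thereby become finite index.
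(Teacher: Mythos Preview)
Your proof is correct and follows essentially the same approach as the paper's: both directions argue contrapositively, and for the harder direction you pass to $\phi^{-1}(H)\cap H$ (your $A_2$ equals the paper's $\varphi^{-1}(H)\cap H$, since $\phi^{-1}(H)\subseteq A$). You are somewhat more explicit than the paper in verifying that the image remains of infinite index after the shrinking, but the argument is the same.
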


  \begin{proof}
    If $G$ is commensurably coHopfian, then $H$ is commensurably coHopfian. Indeed, otherwise, there exists a finite-index subgroup $H' \leq H\leq G$ with an infinite-index embedding $\varphi: H' \rightarrow H \leq G$, contradicting the commensurably coHopficity of $G$.
  
     Conversely, suppose $G$ is not commensurably coHopfian. Then, there exists a finite-index subgroup $G' \leq G$ and an embedding $\varphi: G' \rightarrow G$ so that $\varphi(G')$ is an infinite-index subgroup of $G$. The intersection $\varphi^{-1}(H) \cap H$ is a finite-index subgroup of $H$, since the preimage $\varphi^{-1}(H)$ is a finite-index subgroup of $G$. Therefore $\varphi$ restricts to an infinite-index embedding $\varphi^{-1}(H) \cap H \rightarrow H$. Thus, $H$ is not commensurably coHopfian.
  \end{proof}

  The following example, explained to the authors by Cornulier, shows that the commensurably coHopfian property is not a quasi-isometry invariant.
  
  \begin{example} \label{ex:notQIinv} (Cornulier)
	  Let $\Gamma$ be an arithmetic lattice in $\textrm{SL}_2(\mathbb{Q}_p) \times \textrm{SL}_2(\mathbb{Q}_q)$ for suitable primes $p, q$.
   The group $\Gamma$ acts geometrically on the product of the associated Bruhat-Tits buildings: the product of two trees.
   Thus, the group $\Gamma$ is quasi-isometric to $\mathbb{F}_2 \times \mathbb{F}_2$, the product of two free groups, which is not commensurably coHopfian.
   
    The commensurable coHopficity of $\Gamma$ can be deduced from Margulis' Superrigidity Theorem.
    Indeed, suppose $\Gamma' \leqslant \Gamma$ is a finite-index subgroup and $\phi: \Gamma' \rightarrow \Gamma$ is an infinite-index embedding.
	  Then Superrigidity (see ~\cite[Prop. VII.5.3, p225]{Margulis}, or alternatively \cite[Appendix C]{WitteMorris}) implies that $\phi$ extends to a continuous homomorphism $\Phi: \textrm{SL}_2(\mathbb{Q}_p) \times \textrm{SL}_2(\mathbb{Q}_q) \rightarrow \textrm{SL}_2(\mathbb{Q}_p) \times \textrm{SL}_2(\mathbb{Q}_q)$.
    It then follows from the representation theory of $p$-adic Lie groups that $\Phi$ is an automorphism, contradicting the fact that $\phi$ embeds $\Gamma'$ as an infinite-index subgroup.

  \end{example}

  
  We also note that the following question of Bestvina remains open. 
  
  \begin{ques}
   Does there exist a one-ended hyperbolic group that contains isomorphic finite-index subgroups of different index? 
  \end{ques}

  \subsection{Summary} \label{sec:summary}
    
  The table below summarizes the results in this paper and related open problems. 
  
  \vskip.2in
  
  \Small{
  \begin{center}
   \begin{tabular}{|c||c|c|c||c|}
\hline   
                    & JSJ decomposition	& JSJ decomposition    	& JSJ decomposition		&  $G$ is one-ended   	\\
                    & has only 2-ended	&  has 2-ended,      	& has only 2-ended		&  and hyperbolic\\
$G$ hyperbolic		& and maximal	 	&  maximal hanging  	& and rigid 			&  with trivial JSJ\\
                    & hanging Fuchsian 	& Fuchsian, and rigid	& vertex groups 		& 	decomposition\\
                    & vertex groups  	&  vertex groups	    &				        &	\\
\hline
		&			&			&				&	\\
Commensurably 	& 	Open		& Example~\ref{ex:sc}	&  Example~\ref{prop:rsc}	& Poincar\'{e}	\\	
coHopfian	&	Problem		&			&				& duality groups	\\
		&			&			&				& \cite{Strebel77}	\\
\hline
		&			&			&				&	\\
Not commensurably	& Section~\ref{sec:ex};	& Example~\ref{ex:nsc}	&		Open		&  Open	\\	
coHopfian	& Theorem~\ref{thm:ssa}	&			&		Problem		& Problem	\\
		&			&			&				&	\\
\hline		
   \end{tabular}
   \vskip.1in
   
  \end{center}
}
 
\normalsize

Section~\ref{sec:ex} demonstrates the main example of the paper, and Section~\ref{sec:ssa} generalizes the main example to prove that all simple surface amalgams have fundamental groups that are not commensurably coHopfian. Section~\ref{sec:mixed} gives examples of one-ended hyperbolic groups with mixed JSJ decompositions such that some groups are commensurably coHopfian and others are not. Section~\ref{sec:mixed2} gives an example of commensurably coHopfian one-ended hyperbolic groups that have non-trivial JSJ decomposition and vertex groups that are only two-ended and rigid.

  \subsection*{Acknowledgments}

The authors thank Henry Wilton for bringing this question to our attention. The authors are grateful for helpful discussions with Daniel Groves regarding Example~\ref{ex:sc}, and with Genevieve Walsh. We thank Ilya Kapovich for pointing out the quasi-isometrically coHopfian condition. We thank Yves de Cornulier for explaining Example~\ref{ex:notQIinv}. We thank the referees for their invaluble comments.

  \section{The main example} \label{sec:ex}
 
   A \emph{simple surface amalgam} $X$ is the union of a finite set of surfaces $\Sigma_1, \ldots, \Sigma_k$ with $k \geq 3$ and $\chi(\Sigma_i) < 0$ such that $\p \Sigma_i \cong S^1$ and all boundary components are identified to a single copy of the circle $S^1$ by a homeomorphism. 
   
   The following lemma determines the finite covers of a surface with boundary.
   
   \begin{lemma} \cite[Lemma 3.2]{neumann01} \label{lem:surfaceCovers}
    Let $\Sigma$ be an oriented surface with positive genus.
    Fix a positive integer $d$.
    For each boundary component of $\Sigma$, pick a collection of degrees summing to $d$.
    Then a $d$-sheeted covering $\Sigma' \rightarrow \Sigma$ exists with the prescribed degree coverings in the preimage of each boundary component of $\Sigma$ if and only if the total number of boundary components of $\Sigma'$ has the same parity as $d \chi(\Sigma)$.
   \end{lemma}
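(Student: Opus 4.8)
The plan is to translate the statement into one about transitive permutation representations of $\pi_1(\Sigma)$ and then reduce everything to a sign (parity) count in the symmetric group; the positive-genus hypothesis enters precisely where a genus-$0$ surface would fail. First, fix a basepoint on $\Sigma$; since $\Sigma$ has nonempty boundary, $\pi_1(\Sigma)$ is free of rank $2g+b-1$, where $g\geq 1$ is the genus and $b$ the number of boundary circles, with standard generators $a_1,b_1,\dots,a_g,b_g,c_1,\dots,c_b$ (with $c_j$ a loop around the $j$-th boundary circle) subject to the single relation $\prod_{i=1}^{g}[a_i,b_i]\,c_1\cdots c_b=1$. A connected $d$-sheeted cover $\Sigma'\to\Sigma$ is, up to conjugacy, the same datum as a transitive homomorphism $\rho\colon\pi_1(\Sigma)\to S_d$, and the boundary circles of $\Sigma'$ lying above the $j$-th boundary circle of $\Sigma$ correspond to the cycles of $\rho(c_j)$, the length of a cycle being the covering degree along the corresponding circle. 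Writing $\lambda^{(j)}$ for the prescribed partition of $d$ at the $j$-th boundary circle and $n_j$ for its number of parts, realizing the prescribed covering is thus the same as finding a transitive $\rho$ with $\rho(c_j)$ of cycle type $\lambda^{(j)}$ for every $j$; in that case $\Sigma'$ has $b'=\sum_{j}n_j$ boundary circles.

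For necessity, I would apply $\rho$ to the surface relation and take signs in $S_d$: commutators are even, so $\prod_{j}\operatorname{sgn}(\rho(c_j))=1$. Since a permutation of $\{1,\dots,d\}$ with $c$ cycles has sign $(-1)^{d-c}$, this reads $(-1)^{bd-b'}=1$, i.e. $b'\equiv bd\pmod 2$; as $d\chi(\Sigma)=d(2-2g-b)\equiv bd\pmod 2$, this is exactly the asserted parity condition.

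For sufficiency, suppose $b'\equiv d\chi(\Sigma)\pmod 2$. Choose any $\gamma_j\in S_d$ of cycle type $\lambda^{(j)}$; the same count gives $\operatorname{sgn}(\gamma_1\cdots\gamma_b)=(-1)^{bd-b'}=1$, so $w:=(\gamma_1\cdots\gamma_b)^{-1}$ lies in $A_d$. Using the hypothesis $g\geq 1$, it now suffices to produce $\alpha_1,\beta_1\in S_d$ with $[\alpha_1,\beta_1]=w$ and $\langle\alpha_1,\beta_1\rangle$ transitive on $\{1,\dots,d\}$: taking $\alpha_i=\beta_i=\Id$ for $i\geq 2$, the assignment $a_i\mapsto\alpha_i$, $b_i\mapsto\beta_i$, $c_j\mapsto\gamma_j$ then respects the relation and defines a transitive $\rho$ realizing the prescribed boundary data, whose associated connected cover is the desired $\Sigma'$. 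To build such a pair I would invoke the classical fact that every even permutation of $\{1,\dots,d\}$ is a product of two $d$-cycles: write $w=u^{-1}v$ with $u,v$ both $d$-cycles, let $\beta_1$ be any permutation conjugating $u$ to $v$, and set $\alpha_1:=u^{-1}$; then $[\alpha_1,\beta_1]=u^{-1}(\beta_1u\beta_1^{-1})=u^{-1}v=w$ and $\langle\alpha_1,\beta_1\rangle$ contains the $d$-cycle $\alpha_1$, hence is transitive. (When $d\leq 2$, $w=\Id$ and one may take $\alpha_1=\beta_1=(1\,2\cdots d)$.)

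The parity bookkeeping above is routine; I expect the one genuine point to be the construction in the last step of a \emph{transitive} pair with prescribed commutator, and this is exactly where positive genus is indispensable. For a genus-$0$ surface the relation $c_1\cdots c_b=1$ forces the cycle type of $\rho(c_b)$ from those of the other $\rho(c_j)$, so the sign constraint alone is no longer sufficient; for instance, a pair of pants admits no $3$-fold cover in which one boundary circle is covered by a single degree-$3$ circle while each of the other two splits into three degree-$1$ circles, even though $b'=7$ and $3\chi(\mathrm{pants})=-3$ agree modulo $2$. The one external input — that even permutations factor as products of two full cycles — is standard, and is the substance behind the reference \cite{neumann01}.
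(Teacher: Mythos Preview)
The paper does not actually prove this lemma: it is quoted verbatim from \cite[Lemma~3.2]{neumann01} and used as a black box, so there is no ``paper's own proof'' to compare against. That said, your argument is correct and is essentially the standard one. The necessity direction is exactly the sign computation you give, and for sufficiency your reduction---via the surface relation and the first handle---to the problem of realising an arbitrary element of $A_d$ as a commutator $[\alpha_1,\beta_1]$ with $\langle\alpha_1,\beta_1\rangle$ transitive is clean, and your use of the classical fact that every element of $A_d$ is a product of two $d$-cycles dispatches both requirements at once. Your genus-$0$ counterexample is also correct and pinpoints why the hypothesis $g\ge 1$ is needed. Two very minor remarks: when you say ``free of rank $2g+b-1$ \dots\ subject to the single relation'' the phrasing is slightly awkward (the relation is what reduces the $2g+b$ generators to a free group of the stated rank), and the product-of-two-$d$-cycles fact, while indeed classical, is the one genuinely nontrivial input and is worth an explicit reference if you write this up.
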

   
   We will repeatedly use Lemma~\ref{lem:surfaceCovers} to construct finite covers of a surface amalgam.
   Indeed, given a simple suface amalgam $X$ constructed from surfaces $\Sigma_1, \ldots, \Sigma_k$, we can specify covers of each $\Sigma_i$ and then glue them together along boundary components provided the covering degrees of identified boundary components over the amalgamating curve in $X$ match.
   
    \begin{figure}
	\begin{overpic}[width=.65\textwidth,tics=10, ]{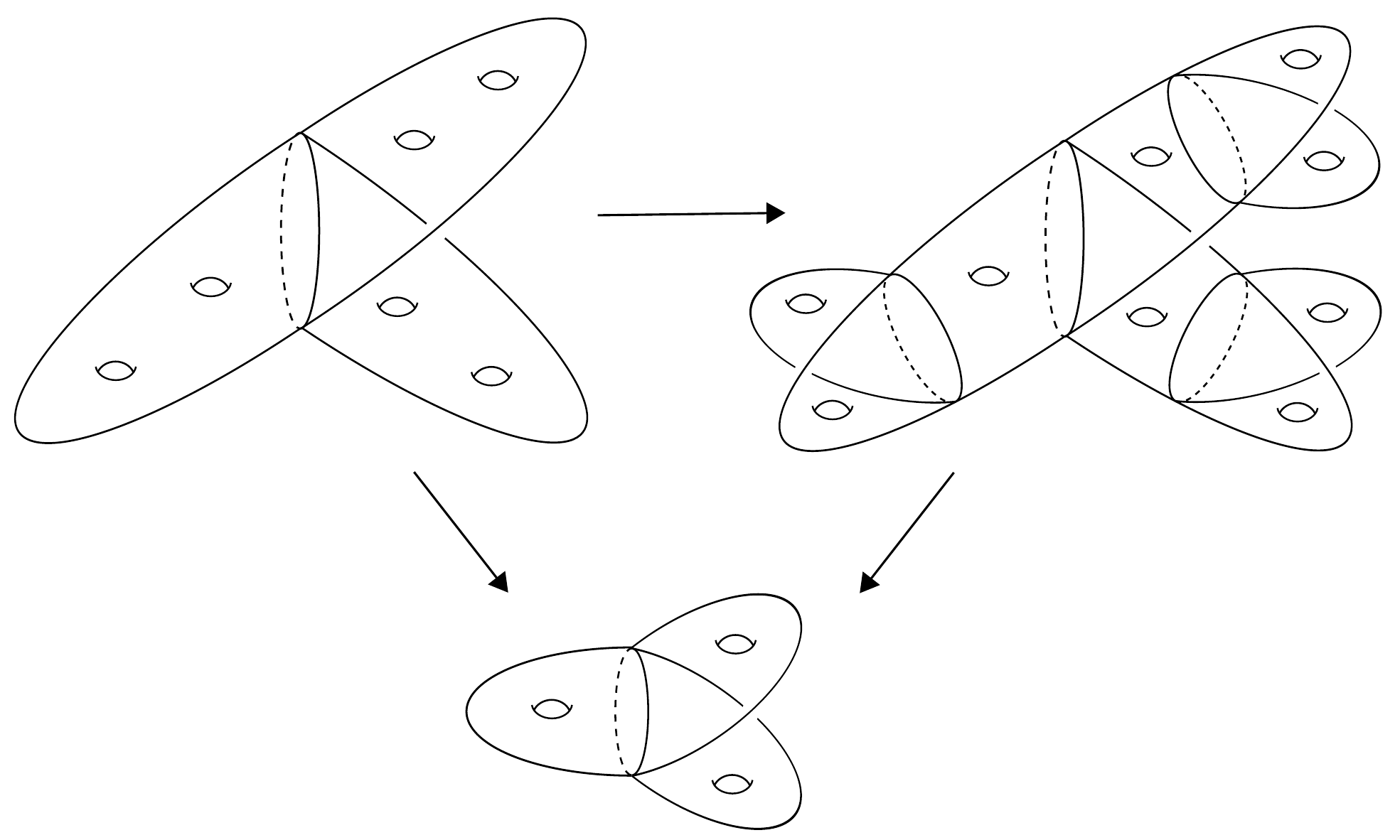} 
          \put(47,47){$\phi$}
          \put(27,8){$X$}
          \put(13,50){$X_1$}
          \put(66,50){$X_2$}
          \put(29,18){$f_1$}
          \put(64,18){$f_2$}
	\end{overpic}
	\caption{\small{  Two finite covers of a simple surface amalgam $X$. The space $X_1$ embeds as a retract of the space $X_2$, such that $\pi_1 (X_1)$ embeds as an infinite-index subgroup of $\pi_1 (X_2)$. Thus, the group $\pi_1(X)$ contains a finite-index subgroup isomorphic to $\pi_1(X_1)$ and an infinite-index subgroup isomorphic to $\pi_1(X_1)$.}}
	\label{figure:coverTrick}
    \end{figure}
   
   \begin{proof}[Proof of Theorem~\ref{thm:main_thm}]
     Let $X$ be a simple surface amalgam with subsurfaces $\Sigma_1, \Sigma_2, \Sigma_3$, where $\Sigma_i$ is a surface of genus one with a single boundary component. Demonstrating that $\pi_1(X)$ is not commensurably coHopfian follows from considering Figure~\ref{figure:coverTrick}.
   
    We first construct a degree-3 cover $f_1: X_1 \rightarrow X$.
    By Lemma~\ref{lem:surfaceCovers}, there exists a degree-3 cover $\Sigma_i' \rightarrow \Sigma_i$ so that $\Sigma_i'$ has a single boundary component for $i \in \{1,2,3\}$. By an elementary Euler characteristic computation, the surface $\Sigma_i'$ has genus two.
    The boundary components of each $\Sigma_i'$ for $i \in \{1,2,3\}$ can be identified to each other by a homeomorphism to construct a $3$-sheeted cover $f_1: X_1 \rightarrow X$.
    
    We now build a degree-4 cover $f_2: X_2 \rightarrow X$.
    By Lemma~\ref{lem:surfaceCovers}, there exists a degree-2 cover $\Sigma_i'' \rightarrow \Sigma_i$ so that $\Sigma_i''$ has two boundary components for $i \in \{1,2,3\}$. Again, by an elementary Euler characteristic computation, the surface $\Sigma_i''$ has genus one. By identifying a single boundary component from each $\Sigma_i''$ and attaching copies of $\Sigma_j, \Sigma_k$ to the other boundary component of $\Sigma_i''$ we obtain the $4$-sheeted covering $f_2: X_2 \rightarrow X$.
        
    There is a $\pi_1$-injective proper embedding $\phi: X_1 \rightarrow X_2$ as shown in Figure~\ref{figure:coverTrick} that yields an embedding of $\pi_1(X_1)$ in $\pi_1(X_2)$ as an infinite-index subgroup. Therefore, $\pi_1(X)$ contains a finite-index subgroup that is isomorphic to an infinite-index subgroup.
   \end{proof}

 \section{Simple surface amalgams are not commensurably coHopfian} \label{sec:ssa}
 
  \begin{thm} \label{thm:ssa}
    If $G$ is the fundamental group of a simple surface amalgam, then $G$ is not commensurably coHopfian. 
  \end{thm}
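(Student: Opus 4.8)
## Proof Proposal

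The plan is to generalize the covering trick of Theorem~\ref{thm:main_thm} to an arbitrary simple surface amalgam $X$ built from surfaces $\Sigma_1, \dots, \Sigma_k$ with $k \geq 3$, where now the $\chi(\Sigma_i)$ may all differ. The obstruction in the special case was that, to glue finite covers of the $\Sigma_i$ along the amalgamating curve, the covering degrees over that curve must match up; when the Euler characteristics differ this forces the covering degrees of the $\Sigma_i$ themselves to differ, and one must control the resulting boundary counts via Lemma~\ref{lem:surfaceCovers}. So the first step is to pass to a well-chosen finite cover $X_0 \to X$ of $X$ in which all subsurfaces have been ``evened out'' — for instance by taking, over each $\Sigma_i$, a connected cover of degree $d_i$ chosen so that the products $d_i \chi(\Sigma_i)$ and the number of boundary components along the glued curve are arranged uniformly. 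After replacing $X$ by $X_0$ (legitimate, since by the Lemma in the introduction being commensurably coHopfian is a commensurability invariant, so it suffices to show $\pi_1(X_0)$ is not commensurably coHopfian) we may assume we are in a normalized situation: all $\Sigma_i$ have a single boundary component and, if needed, equal Euler characteristic.

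The second step is to build the two finite covers $X_1 \to X$ and $X_2 \to X$ exactly as in Figure~\ref{figure:coverTrick}, but in the normalized setting. For $X_1$: use Lemma~\ref{lem:surfaceCovers} to pick, for each $i$, a connected cover $\Sigma_i' \to \Sigma_i$ of some common degree $m$ with a single boundary component (the parity condition of the Lemma is what pins down the allowed $m$), and glue the $\Sigma_i'$ along their boundaries to get an $m$-sheeted cover $X_1 \to X$. For $X_2$: pick for each $i$ a connected cover $\Sigma_i'' \to \Sigma_i$ of degree $m'$ with \emph{two} boundary components (again via the parity condition), glue one boundary component of each $\Sigma_i''$ together to form an ``inner'' copy of the amalgamating curve, and attach to the remaining free boundary component of each $\Sigma_i''$ a further copy of the original amalgam piece $\Sigma_j, \Sigma_k, \dots$ — i.e., hang the rest of $X$ off each dangling boundary circle. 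One must check the degrees multiply correctly so that $X_2 \to X$ is a genuine finite cover; this is a bookkeeping step using that the attached pieces cover $X$ itself.

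The third step is the embedding: one exhibits a $\pi_1$-injective proper embedding $\phi : X_1 \hookrightarrow X_2$ which is moreover a retraction — $X_1$ sits inside $X_2$ as the union of the $\Sigma_i''$ (suitably matched to the $\Sigma_i'$) together with part of the attached material, and there is a retraction $X_2 \to \phi(X_1)$ collapsing the extra hanging copies of $X$ back onto the amalgamating curve. Properness of $\phi$ (the complement $X_2 \setminus \phi(X_1)$ is noncompact, being built from the attached copies of surfaces with negative Euler characteristic) forces $\phi_* \pi_1(X_1)$ to have infinite index in $\pi_1(X_2)$: a finite-index subgroup would correspond to a finite-sheeted cover, but the cover of $X_2$ corresponding to $\phi_*\pi_1(X_1)$ is $X_1$-with-infinitely-many-ends-worth-of-extra-pieces, hence infinite-sheeted. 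Combining: $\pi_1(X) \supseteq \pi_1(X_2)$? No — rather, $\pi_1(X_1)$ is a finite-index subgroup of $\pi_1(X)$ (via $f_1$) and is isomorphic to an infinite-index subgroup of $\pi_1(X_2)$, which is itself finite-index in $\pi_1(X)$; chasing indices, $\pi_1(X)$ has a finite-index subgroup isomorphic to an infinite-index subgroup, so $G = \pi_1(X)$ is not commensurably coHopfian.

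The main obstacle is the first step: verifying that Lemma~\ref{lem:surfaceCovers}'s parity constraint can be satisfied \emph{simultaneously} for all $k$ subsurfaces with prescribed boundary-degree data that is consistent along the glued curve, when the $\chi(\Sigma_i)$ are arbitrary. This amounts to solving a small system of parity/divisibility conditions; the freedom to first pass to a common multiple cover $X_0$ is what makes it tractable, but one has to be careful that the subsurfaces retain positive genus (so the Lemma applies) and that connectedness of the covers is preserved. The embedding and retraction in the third step are geometrically clear from Figure~\ref{figure:coverTrick} and require only routine verification once the covers are in hand.
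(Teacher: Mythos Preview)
Your overall architecture is right and matches the paper: build two finite covers of the amalgam and exhibit a $\pi_1$-injective, infinite-index embedding of one into the other. The gap is in your first step. You propose to pass to a finite cover $X_0$ in which the subsurfaces have equal Euler characteristic, and then re-run the construction of Section~\ref{sec:ex} verbatim. But this normalization is not available: by the commensurability classification of simple surface amalgams (Malone, Stark), the projective class of the vector $(\chi(\Sigma_1),\dots,\chi(\Sigma_k))$ is a commensurability invariant, so no finite cover of $X$ that is again a simple surface amalgam can equalize the $\chi(\Sigma_i)$ unless they were already proportional to $(1,\dots,1)$. Concretely, in your Step~2 the embedding $\phi$ would have to realize $\Sigma_i'$ inside $\Sigma_i''\cup\Sigma_{\sigma(i)}$ for some permutation $\sigma$, forcing $(m-m')\chi_i=\chi_{\sigma(i)}$ for all $i$; iterating around a cycle of $\sigma$ shows all $\chi_i$ on that cycle are equal. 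So your ``small system of parity/divisibility conditions'' is in fact unsolvable in general, and the obstacle you flagged is fatal to this route.

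The paper does \emph{not} try to equalize the $\chi_i$. Instead it first passes to a degree-$2$ cover $\widehat X$ in which every subsurface has two boundary components (this kills the parity obstruction in Lemma~\ref{lem:surfaceCovers} for all later steps), and then builds the large cover $X'$ of degree $D$ and the still-larger cover $X''$ directly. The new idea is that inside $X''$ each surface $\Sigma_i'$ is cut into \emph{three} subsurfaces $\Sigma_{i1}'\cup\Sigma_{i2}'\cup\Sigma_{i3}'$, where the outer pieces cover $\widehat\Sigma_i$ but the middle piece $\Sigma_{i2}'$ covers the \emph{cyclically adjacent} surface $\widehat\Sigma_{i-1}$. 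The covering degrees $d,d_i,D$ are not fixed in advance but are produced by solving the linear system in Claim~\ref{claim:ints}, namely $2d_i(\widehat\chi_{i-1}+\widehat\chi_i)=(D-d)\widehat\chi_i$; this is what replaces your hoped-for equalization and is the genuine content you are missing. Once those integers exist, the extra $k-2$ sheets hung on the curves $\rho_{i1},\rho_{i2}$ make $X''$ an honest finite cover of $\widehat X$, and $X'$ sits inside $X''$ as an infinite-index retract exactly as in your Step~3.
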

  \begin{proof}
    Let $G$ be the fundamental group of a simple surface amalgam $X$ with $k$ subsurfaces $\Sigma_1, \ldots, \Sigma_k$.
    We construct a degree-$2$ cover $\widehat{X} \rightarrow X$ by an application of Lemma~\ref{lem:surfaceCovers}. 
    (This step allows us to resolve any parity issues in the future application of Lemma~\ref{lem:surfaceCovers}.)
    Let $\widehat{X}$ be the union of $k$ surfaces $\widehat{\Sigma}_1, \ldots, \widehat{\Sigma}_k$, where $\chi(\widehat{\Sigma}_i) = 2\chi(\Sigma_i)$.
    Let $\widehat{\chi}_i = \chi(\widehat{\Sigma}_i)$.
    The surface $\widehat{\Sigma}_i$ has two boundary components $\gamma_i$ and $\gamma_i'$, and $\widehat{X}$ is obtained by identifying the curves $\{\gamma_i \, | \, 1 \leq i \leq k\}$ to a single curve $\gamma$ and the curves $\{\gamma_i' \, | \, 1 \leq i \leq k\}$ to a single curve $\gamma'$.  
    
    As in Section~\ref{sec:ex}, we will construct two finite covers $X'$ and $X''$ of the space $\widehat{X}$ so that the space $X'$ embeds $\pi_1$-injectively in the space $X''$, such that $\pi_1 (X')$ embeds as an infinite-index subgroup of $\pi_1 (X'')$. See Figure~\ref{figure:surfacesss}. This construction relies on the next claim. 
    
        \begin{figure}
	\begin{overpic}[width=.78\textwidth,tics=3 ]{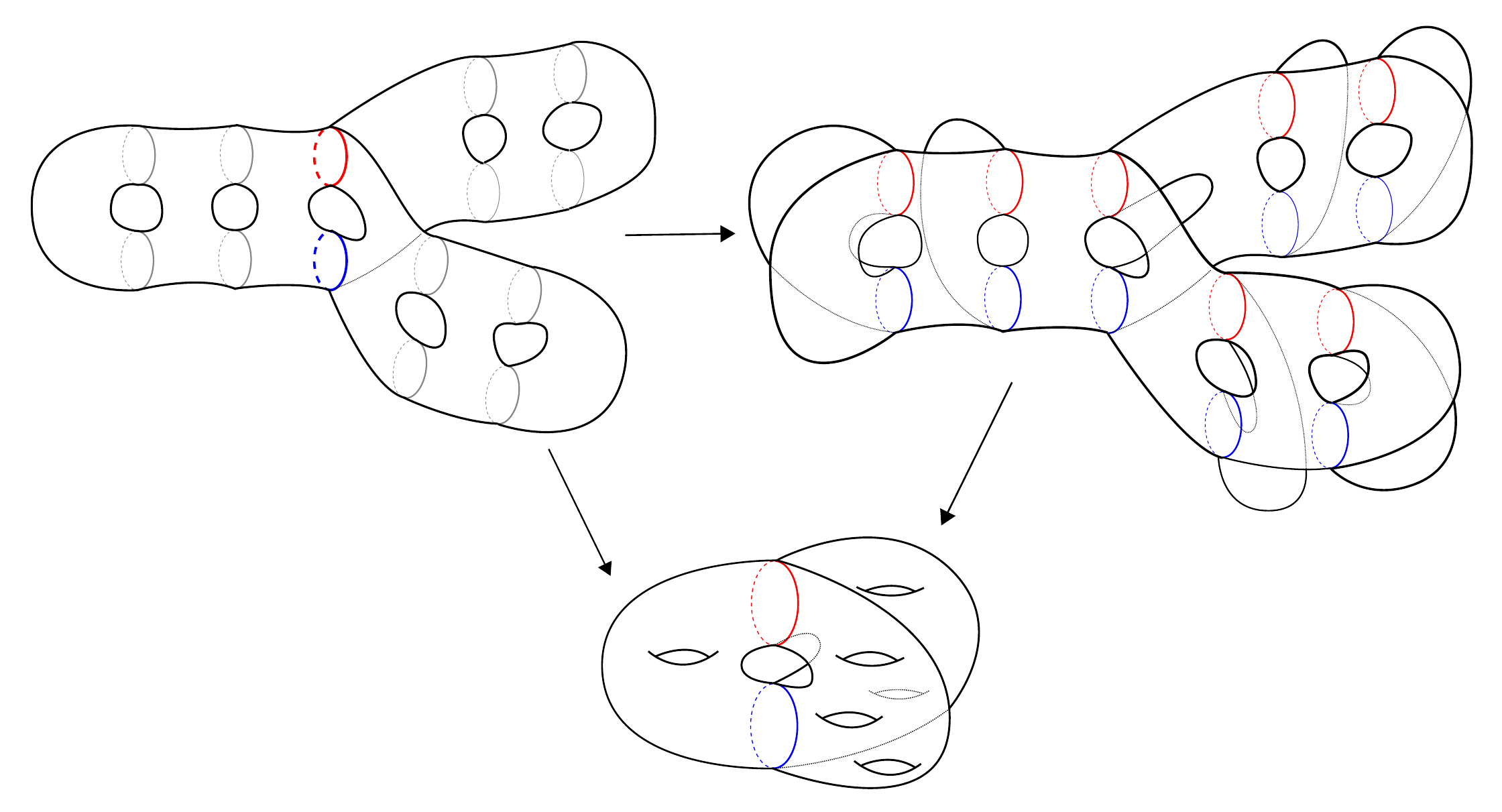} 
	\put(9,51){$X'$}
	\put(33,9){$\widehat{X}$}
	\put(69,51){$X''$}
	\put(6,31){$\Sigma_i'$}
	\put(42,.06){$\widehat{\Sigma}_i$}
	\put(50,18){\small{$\gamma$}}
	\put(50,-0.2){\small{$\gamma'$}}
	\put(73,45){\small{$\rho$}}
	\put(71.6,29){\small{$\rho'$}}
	\put(67,45.5){\Small{$\rho_{i1}$}}
	\put(66,29.5){\Small{$\rho_{i1}'$}}
	\put(59,45.5){\Small{$\rho_{i2}$}}
	\put(59,29.5){\Small{$\rho_{i2}'$}}
	\put(4,36.7){\Small{$\Sigma_{i3}'$}}
	\put(10.6,36.7){\Small{$\Sigma_{i2}'$}}
	\put(17,36.7){\Small{$\Sigma_{i1}'$}}
	\end{overpic}
	\caption{\small{The spaces $X'$ and $X''$ finitely cover the space $\widehat{X}$, and the space $X'$ embeds $\pi_1$-injectively in the space $X''$, such that $\pi_1(X')$ embeds as an infinite-index subgroup of $\pi_1 (X'')$. Thus, the fundamental group of $\widehat{X}$ is not commensurably coHopfian since it contains a finite-index subgroup and an infinite-index subgroup isomorphic to $\pi_1(X')$.}}
	\label{figure:surfacesss}
    \end{figure}
    
     \begin{claim} \label{claim:ints}
       There exists a set of positive integers $\{D,d, d_i \, | \, 1 \leq i \leq k\}$ so that 
    \begin{eqnarray*}
     (d + d_1) \widehat{\chi}_1 \; + \; 2d_1 \widehat{\chi}_k \; + \;  d_1 \widehat{\chi}_1 & = & D\widehat{\chi}_1 \\
     (d + d_2) \widehat{\chi}_2 \; + \; 2d_2 \widehat{\chi}_1 \; + \;  d_2 \widehat{\chi}_2 & = & D\widehat{\chi}_2 \\
      & \vdots &  \\
     (d + d_k) \widehat{\chi}_k + 2d_k \widehat{\chi}_{k-1} + d_k \widehat{\chi}_k & = & D\widehat{\chi}_k.
    \end{eqnarray*}
        \end{claim}
        
         \begin{proof}[Proof of Claim.]
	Rewrite the $i$-th equation in the following form (with indices mod $k$):
	\[
	 2 d_i (\widehat{\chi}_{i-1}+ \widehat{\chi}_{i}) = (D - d)\widehat{\chi}_i.
	\]
        Choose positive integers $D$ and $d$ such that $D > d$ and $(D-d)$ is divisible by $2 \lcm\{(|\widehat{\chi}_{i-1} + \widehat{\chi}_{i}|) \, \mid \, 1 \leq i \leq k\}$. Then, we obtain positive integers 
        \[
         d_i := \frac{(D - d)\widehat{\chi}_i}{2 (\widehat{\chi}_{i-1} + \widehat{\chi}_{i})} \geq 1.
        \]
      \end{proof}
    
    Let $D, d, d_1, \ldots, d_k$ be positive integers satisfying the equations given by Claim~\ref{claim:ints}.
    There exists a degree-$D$ cover $X' \rightarrow \widehat{X}$ constructed as follows. Let $X'$ be the union of $k$ surfaces $\Sigma_1', \ldots, \Sigma_k'$, so that $\chi(\Sigma_i') = D \widehat{\chi}_i = 2D\chi_i$, the surface $\Sigma_i'$ has two boundary components $\rho_i$ and $\rho_i'$, and ${X}'$ is obtained by identifying the curves $\{\rho_i \, | \, 1 \leq i \leq k\}$ to a single curve $\rho$ and the curves $\{\rho_i' \, | \, 1 \leq i \leq k\}$ to a single curve $\rho'$. 
    By Lemma~\ref{lem:surfaceCovers}, there exists a degree-$D$ covering map $\Sigma_i' \rightarrow \widehat{\Sigma}_i$ which restricts to a degree-$D$ cover on each of the boundary components $\rho \rightarrow \gamma$ and $\rho'\rightarrow \gamma'$. Thus, these maps glue to yield a degree-$D$ cover $X' \rightarrow \widehat{X}$.
    
    To build the space $X''$, for $i \in \{1, \ldots, k\}$ we will partition each surface $\Sigma_i' \subset X'$ into three subsurfaces, $\Sigma_{i1}', \Sigma_{i2}', \Sigma_{i3}'$ and attach additional subsurfaces to the boundary curves of $\Sigma_{i2}'$ as follows.
    The construction is illustrated in Figure~\ref{figure:surfacesss}. 
    In particular, the construction ensures that the space $X'$  embeds in $X''$ $\pi_1$-injectively, and $\pi_1 (X')$ embeds as an infinite-index subgroup of $\pi_1 (X'')$.
    Let $\Sigma_{i1}'$ be the subsurface of $\Sigma_i'$ with Euler characteristic $(d+d_i)\widehat{\chi}_i$ and four boundary components, two of which are the curves $\rho_i$ and $\rho_i'$; call the other boundary curves $\rho_{i1}$ and $\rho_{i1}'$. 
    Let $\Sigma_{i2}'$ be the subsurface with Euler characteristic $2d_i\widehat{\chi}_{i-1}$ (subscript mod $k$) and four boundary components, two of which are $\rho_{i1}$ and $\rho_{i1}'$; call the other boundary curves $\rho_{i2}$ and $\rho_{i2}'$.
    Finally, let $\Sigma_{i3}'$ be the subsurface with Euler characteristic $d_i\widehat{\chi}_i$ and two boundary curves, $\rho_{i2}$ and $\rho_{i2}'$. 
    Claim~\ref{claim:ints} implies that we indeed have the decomposition $\Sigma_i' \cong \Sigma_{i1}' \cup \Sigma_{i2}' \cup \Sigma_{i3}'$, since the Euler characteristics  of  $\Sigma_{i1}'$,  $\Sigma_{i2}'$, and $\Sigma_{i3}'$ sum to $D\widehat{\chi}_i$ .     
    For $i \in \{1, \ldots, k\}$ attach $k-2$ surfaces $\{\Sigma^i_j \, | \, j\in \{1, \ldots, k\}, j \neq i, i-1\}$ with two boundary components and Euler characteristics $\chi(\Sigma^i_j) = d_i\widehat{\chi}_j$ to the curves $\{\rho_{i1},\rho_{i1}'\}$.     
    Similarly, attach $k-2$ surfaces $\{\Sigma'^i_j \, | \, j\in \{1, \ldots, k\}, j \neq i, i-1\}$ with two boundary components and Euler characteristics $\chi(\Sigma^i_j) = d_i\widehat{\chi}_j$ to the pair of curves $\{\rho_{i2},\rho_{i2}'\}$. 
    
    We now prove there exists a degree-$(d + 2 \sum_{i=1}^k d_i)$ covering map  $X'' \rightarrow \widehat{X}$. We describe the cover on the branching curves of $X''$, and then we use Lemma~\ref{lem:surfaceCovers} to show the cover extends to all of $X''$. 
    As above, suppose the curves $\{\rho_i\}_{i=1}^k$ and $\{\rho_i'\}_{i=1}^k$ are glued together to form the curves $\rho$ and $\rho'$ in $X''$, respectively. Then, $\rho$ and $\rho'$ cover the curves $\gamma$ and $\gamma'$ by degree $d$. 
    For all $i \in \{1, \ldots, k\}$, the curves $\rho_{i1}$ and $\rho_{i2}$ cover the curve $\gamma$ by degree $d_i$, and the curves $\rho_{i1}'$ and $\rho_{i2}'$ cover the curve $\gamma'$ by degree $d_i$. 
    By Lemma~\ref{lem:surfaceCovers}, there exists a degree $(d+d_i)$ cover $\Sigma'_{i1} \rightarrow \Sigma_i'$, a degree $2d_i$ cover $\Sigma'_{i2} \rightarrow \Sigma_{i-1}'$, and a degree $d_i$ cover $\Sigma_{i3}' \rightarrow \Sigma_i'$.
    By Lemma~\ref{lem:surfaceCovers}, there are degree $d_i$ covers $\Sigma^i_j \rightarrow \Sigma_j'$ and $\Sigma'^i_j \rightarrow \Sigma_j'$. 
    Since these covering maps agree on their intersection, there exists a finite cover $X'' \rightarrow \widehat{X}$. 
  \end{proof} 

  \section{Examples with mixed JSJ decomposition} \label{sec:mixed}

    \begin{figure}        
	\begin{overpic}[width=.8\textwidth, tics=5 ]{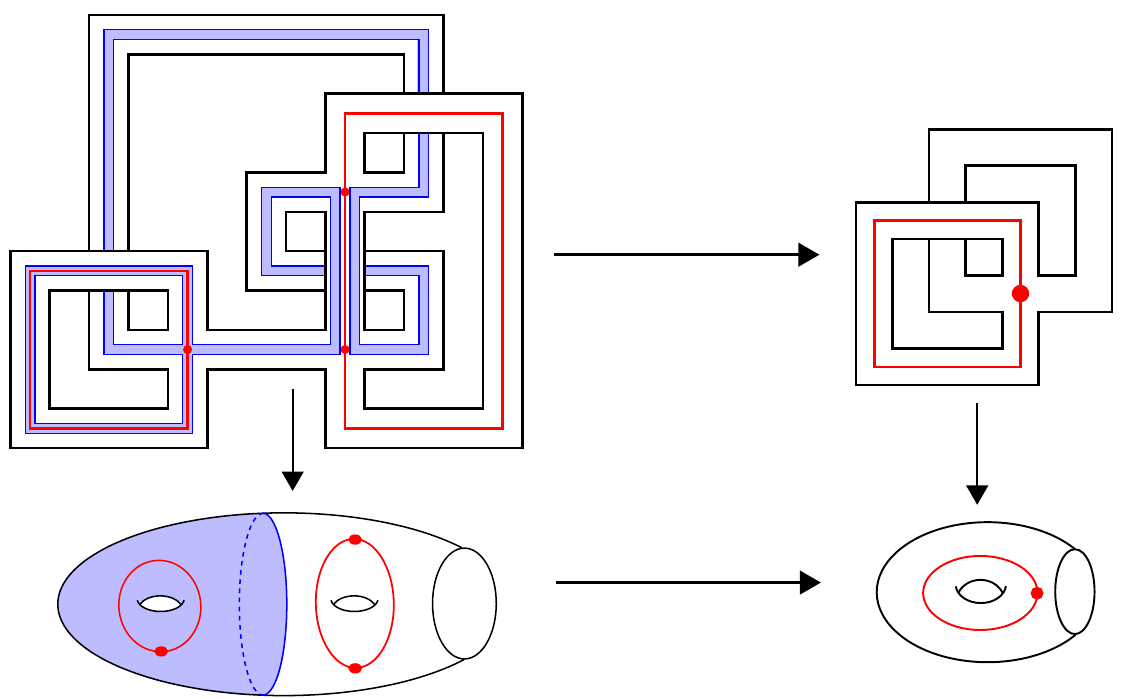} 
	\put(60,41.5){3}
	\put(60,12.5){3}
	\put(22,22){$\cong$}
	\put(83,21){$\cong$}
	\end{overpic}
	\caption{\small{ A degree-3 cover of a surface $\Sigma$ of genus one with one boundary component by a surface of genus two and one boundary component. The red curve on $\Sigma$ has two pre-images that lie in subsurfaces separated by the blue curve. The existence of such a cover is evident from the ``fat-graph'' representation of the surfaces, drawn on the top row. The vertical homeomorphisms are color-preserving. }}
	\label{figure:mixedone}
    \end{figure}
 
  \begin{example} \label{ex:nsc}
   (Not commensurably coHopfian.) We adapt the proof in Section~\ref{sec:ex} to exhibit a one-ended hyperbolic group $G$ whose JSJ decomposition contains both maximal hanging Fuchsian vertex groups and rigid vertex groups and so that $G$ is not commensurably coHopfian. An illustration of this example appears in Figure~\ref{figure:mixedone} and Figure~\ref{figure:mixedthree}. Let $X_0$ be a simple surface amalgam with subsurfaces $\Sigma_1, \Sigma_2, \Sigma_3$, where $\Sigma_i$ is a surface of genus one with a single boundary component. Let $a_i$ be an essential simple closed curve on $\Sigma_i$ that is not homotopic to the boundary. There exists a homeomorphism $\phi_{ij}:\Sigma_i \rightarrow \Sigma_j$ so that $\phi_{ij}(a_i) = a_j$ for all $i,j \in \{1,2,3\}$. 
   
   Let $H$ be a torsion-free one-ended hyperbolic group that does not split over a virtually cyclic subgroup, and let $X_H$ be a finite cell complex with $\pi_1(X_H) \cong H$. Suppose there exists an infinite-order element $h \in H$ represented by a closed curve $a_h$ on $X_H$ so that there exists a degree-2 cover $X_H' \rightarrow X_H$ in which $a_h$ lifts to a single closed curve on $X_H'$. (For a concrete example, let $H \cong \pi_1(S) \rtimes \la t \ra$ be the fundamental group of a closed fibered hyperbolic $3$-manifold with fiber a closed surface $S$, and let $h=t$.) For $i \in \{1,2,3\}$, let $\phi_i:X_{H} \rightarrow X_{Hi}$ be a homeomorphism, and let $a_{hi} = \phi(a_h)$. For $i \in \{1,2,3\}$ let $A_i$ be an annulus. Glue one boundary component of $A_i$ to the curve $a_i$ and the other boundary component of the annulus to the curve $a_{hi}$ by homeomorphisms. Let $X$ be the resulting complex, and let $G$ be the fundamental group of $X$. The JSJ decomposition of $G$ over $2$-ended vertex groups contains three maximal hanging Fuchsian vertex groups, $\pi_1(\Sigma_i)$, and three rigid vertex groups, $\pi_1(X_{Hi})$. 
   
    \begin{claim}
      The group $G$ is not commensurably coHopfian.
    \end{claim}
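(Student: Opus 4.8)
\emph{Approach.} The plan is to imitate Section~\ref{sec:ex}. I will produce a degree-$3$ cover $f_{1}\colon X_{1}\to X$, a degree-$4$ cover $f_{2}\colon X_{2}\to X$, and a $\pi_{1}$-injective embedding $\phi\colon X_{1}\hookrightarrow X_{2}$ whose image has infinite index in $\pi_{1}(X_{2})$. Since $\pi_{1}(X_{2})$ has finite index in $G$, this displays $\pi_{1}(X_{1})$ simultaneously as a finite-index subgroup of $G$ (via $(f_{1})_{*}$) and as an infinite-index subgroup of $G$ (via $\phi_{*}$ followed by $\pi_{1}(X_{2})\hookrightarrow G$), which is exactly the assertion. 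The only new ingredient beyond Section~\ref{sec:ex} is that the rigid pieces must be carried along the curves $a_{i}$: each time $a_{i}$ lifts to a degree-$m$ curve in a cover of $\Sigma_{i}$, I must glue on, via the annulus $A_{i}$, a degree-$m$ cover of $X_{Hi}$ in which $a_{hi}$ lifts to a single curve. The hypothesis that $X_{H}$ admits such a degree-$2$ cover $X_{H}'$ is precisely what makes this work for $m=2$, so I will arrange every cover so that $a_{i}$ lifts only to curves of degree $1$ or $2$.

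\emph{The covers.} First I would build $X_{1}$ from the degree-$3$ cover of Figure~\ref{figure:mixedone}: three genus-two surfaces $\Sigma_{i}'$ with one boundary component, amalgamated along a single curve, in which $a_{i}$ lifts to a degree-$1$ curve $a_{i}'$ and a degree-$2$ curve $a_{i}''$. (Such a cover exists by a homomorphism $\pi_{1}(\Sigma_{i})=F_{2}\to S_{3}$ sending $a_{i}$ to a transposition and the boundary to a $3$-cycle, together with Lemma~\ref{lem:surfaceCovers} and an Euler-characteristic count.) Then attach to $a_{i}'$ an annulus $\cong A_{i}$ and a copy of $X_{Hi}$, and to $a_{i}''$ the degree-$2$ cover of $A_{i}$ and a copy of $X_{Hi}'$; this gives a degree-$3$ cover $f_{1}\colon X_{1}\to X$. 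Next I would build $X_{2}$ exactly as in Section~\ref{sec:ex}: take degree-$2$ covers $\Sigma_{i}''\to\Sigma_{i}$ of genus one with two boundary components in which $a_{i}$ lifts to a single degree-$2$ curve, glue one boundary component of each $\Sigma_{i}''$ along a central curve, and attach to the other boundary curve $c_{i}$ of $\Sigma_{i}''$ copies of the two surfaces $\Sigma_{j}$, $j\neq i$, so that $c_{i}$ has three pages; finally attach a copy of $X_{Hi}'$ along the degree-$2$ lift of $a_{i}$ and a copy of $X_{Hj}$ along the degree-$1$ lift of $a_{j}$ in each attached copy of $\Sigma_{j}$. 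Counting sheets over each $\Sigma_{i}$, over each $A_{i}$, over each $X_{Hi}$, and over the amalgamating curve shows $f_{2}\colon X_{2}\to X$ has degree $4$.

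\emph{The embedding and infinite index.} For each $i$ pick one of the two copies of $\Sigma_{j}$ at $c_{i}$, choosing $j=j(i)$ so that each of $\Sigma_{1},\Sigma_{2},\Sigma_{3}$ is used once, let $k(i)$ be the unused index, and set $S_{i}=\Sigma_{i}''\cup_{c_{i}}(\text{the chosen }\Sigma_{j(i)})$. I claim there is a homeomorphism $\Sigma_{i}'\to S_{i}$ sending $a_{i}''$ to the degree-$2$ lift of $a_{i}$ in the $\Sigma_{i}''$ part and $a_{i}'$ to the degree-$1$ lift of $a_{j(i)}$ in the $\Sigma_{j(i)}$ part: both $\{a_{i}',a_{i}''\}$ and this second pair are disjoint non-separating simple closed curves whose complement in the relevant genus-two surface is a connected planar surface with five boundary components carrying corresponding pairings, so the change-of-coordinates principle applies. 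Such a homeomorphism can be chosen compatibly with the central curves and then extended over the annuli and rigid pieces, because all rigid pieces present are homeomorphic copies of $X_{H}$ or $X_{H}'$ and the covering degrees along the attaching curves match ($1$ with $1$, $2$ with $2$). This yields an embedding $\phi\colon X_{1}\hookrightarrow X_{2}$ onto the subcomplex $Y$ made up of the central curve, the surfaces $S_{i}$, and their attached rigid pieces. Letting $W_{i}$ denote the leftover chunk at $c_{i}$ (the unused copy of $\Sigma_{k(i)}$ with its attached copy of $X_{H}$), one has $X_{2}=Y\cup W_{1}\cup W_{2}\cup W_{3}$ with $Y\cap W_{i}=c_{i}$ an essential circle in both and $W_{i}\cap W_{j}=\emptyset$ for $i\neq j$; so $\pi_{1}(X_{2})=\pi_{1}(Y)*_{\langle c_{1}\rangle}\pi_{1}(W_{1})*_{\langle c_{2}\rangle}\pi_{1}(W_{2})*_{\langle c_{3}\rangle}\pi_{1}(W_{3})$, which shows $\phi$ is $\pi_{1}$-injective. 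Each $\langle c_{i}\rangle$ is an infinite-index (in particular proper) subgroup of both $\pi_{1}(Y)$ and $\pi_{1}(W_{i})$, since neither is virtually cyclic, so the standard fact that a vertex group $A$ in $A*_{C}B$ with $C\subsetneq A$, $C\subsetneq B$ has infinite index, applied repeatedly, gives that $\pi_{1}(Y)=\phi_{*}\pi_{1}(X_{1})$ has infinite index in $\pi_{1}(X_{2})$, hence in $G$. With $(f_{1})_{*}$ giving the finite-index copy, this proves $G$ is not commensurably coHopfian.

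\emph{Main obstacle.} The crux is the homeomorphism $\Sigma_{i}'\to S_{i}$ of the third paragraph: I must check that the two lifts of $a_{i}$ sit in the genus-two cover $\Sigma_{i}'$ in exactly the configuration that the two relevant curves cut out in $S_{i}=\Sigma_{i}''\cup\Sigma_{j(i)}$, with the degree-$1$ and degree-$2$ curves correctly labelled, so that the homeomorphism — and with it the matching of $X_{1}$'s rigid pieces to those of $X_{2}$ — really exists. This is where the fat-graph pictures of Figures~\ref{figure:mixedone} and~\ref{figure:mixedthree} do the work; once the curves are matched, the extension over annuli and rigid pieces is routine, and, unlike in Section~\ref{sec:ex}, the infinite-index conclusion comes not from a retraction (none is available, as $X_{H}$ is rigid) but from the amalgamated-product structure of $\pi_{1}(X_{2})$ above.
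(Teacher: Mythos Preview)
Your proof is correct and takes essentially the same approach as the paper: the same degree-$3$ cover (the paper's $X'$), a homeomorphic degree-$4$ cover (the paper's $X''$), and the same embedding. The only difference is presentational---the paper \emph{defines} $X''$ as $X'$ with one extra copy of a surface-plus-rigid-piece glued along each of the separating curves $\gamma_i$ from Figure~\ref{figure:mixedone}, so that the inclusion $X'\hookrightarrow X''$ is tautological and the work lies in checking $X''$ is a cover, whereas you assemble $X_2$ directly from the Section~\ref{sec:ex} recipe (making the covering map transparent) and then recover the embedding via the change-of-coordinates argument on the genus-two pages.
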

   
   \begin{proof}
    We first construct a degree-3 cover $X' \rightarrow X$. As shown in Figure~\ref{figure:mixedone}, for $i \in \{1,2,3\}$ there exists a degree-$3$ cover $\Sigma_i' \rightarrow \Sigma_i$ so that $\Sigma_i'$ has one boundary component and genus two and so that the preimage of the curve $a_i$ has two components $a_i'$ and $a_i''$, where $a_i'$ covers $a_i$ by degree one and $a_i''$ covers $a_i$ by degree two. Moreover, there exists a closed curve $\gamma_i$ (shown in blue in Figure~\ref{figure:mixedone}) that separates $\Sigma_i'$ into two subsurfaces; one subsurface has boundary $\gamma_i$ and contains the curve $a_i'$, and the other subsurface has two boundary components and contains the curve $a_i''$. Thus, as in Section~\ref{sec:ex}, the boundary components of $\Sigma_i'$ can be glued together to form a degree three cover of the simple surface amalgam $X_0' \rightarrow X_0$. By assumption on the group $H$, the degree-3 cover of the simple surface amalgam extends to a degree-3 cover of $X$ obtained by taking copies of $X_H$ and copies of the degree-two cover $X_H'$ and attaching them along annuli to lifts of the curves $a_{hi}$ on $X_0'$. See Figure~\ref{figure:mixedthree}.
   
    The degree-4 cover $X'' \rightarrow X$ is constructed in analogy to the construction in Section~\ref{sec:ex}. 
    The space $X''$ contains the space $X'$ as a subspace that induces a $\pi_1$-injective embedding of the fundamental group as an infinite-index subgroup, and for $i \in \{1,2,3\}$ to each of the curves $\gamma_i \subset X'$ defined in the paragraph above a copy of $\Sigma_i \cup X_{Hi}$ is glued along the boundary component of $\Sigma_i$. As above, the space $X''$ forms a degree-4 cover of $X$. 
    Then $X'$ embeds $\pi_1$-injectively in $X''$ such that $\pi_1 (X')$ embeds as an infinite-index subgroup of $\pi_1 (X'')$. So, the group $G = \pi_1(X)$ contains a finite-index subgroup isomorphic to $\pi_1(X')$ and an infinite-index subgroup isomorphic to $\pi_1(X')$. Thus, $G$ is not commensurably coHopfian. 
    \end{proof}
   
       \begin{figure}
	\begin{overpic}[width=.95\textwidth, tics=3 ]{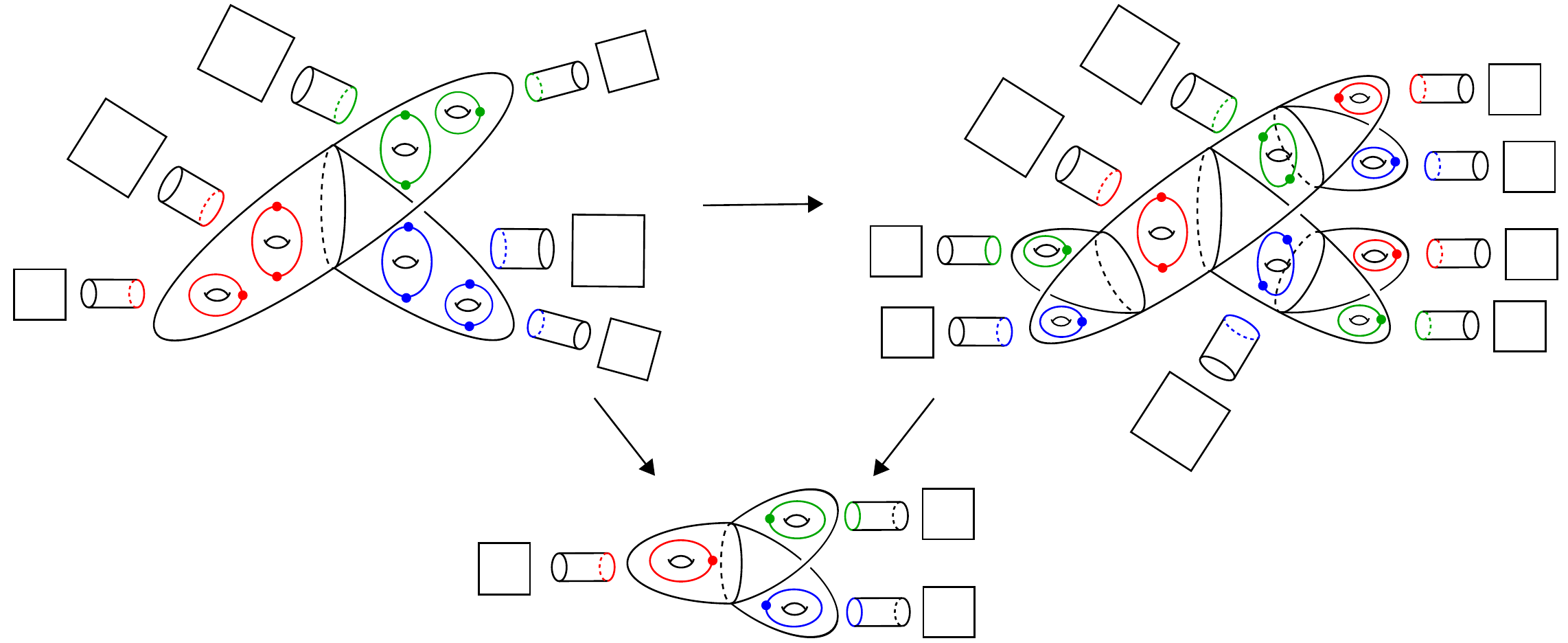} 
	\put(24,6){$X$}
	\put(0,37){$X'$}
	\put(57,37){$X''$}
	\put(43.5,30){\small{inclusion}}
	\put(36.5,12){$3$}
	\put(59,12){$4$}
	\put(30.6,4){\tiny{$X_H$}}
	\put(59,1.5){\tiny{$X_H$}}
	\put(59,7.8){\tiny{$X_H$}}
	\put(1,21.7){\tiny{$X_H$}}
	\put(38.6,36.35){\tiny{$X_H$}}
	\put(38.65,18.2){\tiny{$X_H$}}
	\put(6,31){\tiny{$X_H'$}}
	\put(14,37){\tiny{$X_H'$}}
	\put(37.2,24.5){\tiny{$X_H'$}}	
	\put(56.3,19){\tiny{$X_H$}}
	\put(55.6,24.3){\tiny{$X_H$}}
	\put(95.1,34.6){\tiny{$X_H$}}
	\put(96.05,29.8){\tiny{$X_H$}}
	\put(96.1,24){\tiny{$X_H$}}
	\put(95.4,19.5){\tiny{$X_H$}}
	\put(63.2,32.6){\tiny{$X_H'$}}
	\put(70.5,37.2){\tiny{$X_H'$}}
	\put(73.7,13.5){\tiny{$X_H'$}}
	\end{overpic}
	\caption{\small{The squares represent cell complexes $X_H$ and $2$-fold covers $X_H'$ of $X_H$. The spaces $X'$ and $X''$ are finite covers of the space $X$, and the space $X'$ embeds $\pi_1$-injectively in $X''$, such that $\pi_1 X'$ embeds as an infinite index subgroup of $\pi_1 X''$. Thus, the group $\pi_1(X)$ contains a finite-index subgroup isomorphic to $\pi_1(X')$ and an infinite-index subgroup isomorphic to $\pi_1(X')$.}}
	\label{figure:mixedthree}
    \end{figure}
  \end{example}

   Examples~\ref{ex:sc} and~\ref{prop:rsc} make use of the notion of an \emph{acylindrical submanifold}.
   Let $M$ be a Riemannian manifold and $N \subseteq M$ a locally convex submanifold.
   Let $A$ denote the annulus.
   The submanifold $N$ is said to be \emph{acylindrical} if any $\pi_1$-injective map $(A, \partial A) \rightarrow (M,N)$ is relatively homotopic to a map $(A, \partial A) \rightarrow (N,N)$.
   Equivalently, the subgroup $\pi_1 (N) \leqslant \pi_1 (M)$ is \emph{malnormal} in the sense that $\pi_1 (N) \cap \pi_1(N)^g = \{1 \}$ for all $g \in \pi_1(M) - \pi_1(N)$.
   In particular, if $M$ is a closed hyperbolic manifold and $N$ is a simple closed geodesic, then $N$ is acylindrical in $M$.
   
   We also require the following lemma; we include a proof as we are unaware of a reference.
   
   \begin{lemma} \label{lem:surfaceMapsRelativeToBoundary}
     Let $\Sigma$ and $\Sigma'$ be compact surfaces with boundary with negative Euler characteristic.
     If $f : (\Sigma, \partial \Sigma) \rightarrow (\Sigma', \partial \Sigma')$ is a $\pi_1$-injective map, then $f$ is homotopic to a finite-degree covering map of $\Sigma'$.
   \end{lemma}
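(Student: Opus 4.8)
The plan is to promote $f$ to a homotopy equivalence onto a compact subsurface and then to a covering map, using the fact that a $\pi_1$-injection between surface groups is geometrically tame. First I would pass to the standard homotopy model: put a hyperbolic metric with geodesic boundary on $\Sigma$ and on $\Sigma'$, so $\pi_1(\Sigma') \leq \mathrm{Isom}(\mathbb{H}^2)$ is a free, finitely generated, geometrically finite Fuchsian group. Since $f$ is $\pi_1$-injective, $f_*\pi_1(\Sigma)$ is a finitely generated subgroup $\Gamma \leq \pi_1(\Sigma')$; by the Scott core theorem (or just the fact that finitely generated Fuchsian groups are geometrically finite), there is a compact incompressible subsurface $\Sigma'' \subseteq \Sigma'$ with $\pi_1(\Sigma'') = \Gamma$ up to conjugacy. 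I would then argue $f$ is homotopic to a map into $\Sigma''$ inducing an isomorphism on $\pi_1$, hence (both spaces being aspherical with isomorphic $\pi_1$) a homotopy equivalence $\Sigma \to \Sigma''$.

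The second step is to turn the inclusion $\iota:\Sigma'' \hookrightarrow \Sigma'$ into a finite-degree covering after a further homotopy. The key point is that $\Sigma''$ is incompressible, so each boundary curve of $\Sigma''$ is either boundary-parallel in $\Sigma'$ or is a non-peripheral essential simple closed curve. If some component of $\partial\Sigma''$ were non-peripheral, then $\Sigma' \setminus \Sigma''$ would contain a component which is a subsurface with at least one boundary curve glued to $\partial\Sigma''$ and (some of) its other boundary on $\partial\Sigma'$; such a component would be $\pi_1$-injective and, unless it is an annulus, would make $\Gamma$ of infinite index in $\pi_1(\Sigma')$ — that is fine a priori, but then $f$ would not be a covering. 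So the claim in the lemma must be that \emph{the hypothesis alone does not force this}; rereading, the intended conclusion is genuinely that $f$ \emph{is} homotopic to a finite covering, which forces me to observe that here "$\pi_1$-injective" must be read as "the image is finite index." Indeed the only way a $\pi_1$-injective map of a compact surface with boundary to another can fail to be finite-index is exactly the annular-complement case above, and in that case one can absorb the annulus by a homotopy (the boundary curve, being freely homotopic into $\partial\Sigma'$, can be pushed across the annulus) and strictly decrease the complexity $-\chi(\Sigma' \setminus \Sigma'')$; iterating, one reaches the stage where every component of $\partial\Sigma''$ is boundary-parallel in $\Sigma'$.

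Once every boundary curve of $\Sigma''$ is peripheral in $\Sigma'$, I would invoke the covering criterion: $\Sigma'' \hookrightarrow \Sigma'$ is a $\pi_1$-injection with $\pi_1(\Sigma'')$ of finite index (this follows because a compact incompressible subsurface all of whose boundary is peripheral is exactly a finite cover restricted to a fundamental domain, or more directly because $\chi(\Sigma'')$ is then a negative integer multiple of $\chi(\Sigma')$ by an Euler characteristic count on the complementary annuli), and a finite-index subgroup of a surface group is the fundamental group of a finite cover. Lifting $f$ through this cover and appealing to asphericity makes the lifted map a homotopy equivalence between closed-up surfaces of the same type, hence homotopic to a homeomorphism; composing with the covering $\Sigma'' \to \Sigma'$ gives the desired finite-degree covering map, and finally one checks the homotopy can be taken relative to $\partial\Sigma$ by tracking boundary curves throughout, each of which maps to a power of a boundary curve of $\Sigma'$ and can be straightened independently.

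The main obstacle is the boundary bookkeeping: ensuring the whole sequence of homotopies (into the core, across annular complementary pieces, through the finite cover) can be performed \emph{relative to} $\partial\Sigma$, so that the final covering map restricts to the expected power maps on boundary circles. The non-boundary part is essentially the classical fact that $\pi_1$-injective, finite-index-image maps of hyperbolic surfaces are homotopic to covers, but making it relative requires care with peripheral structures, which I would handle by working throughout with the hyperbolic metric with geodesic boundary and straightening each boundary arc to its geodesic representative.
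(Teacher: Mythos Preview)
Your proposal has a genuine gap at its core. Midway through, you write that ``$\pi_1$-injective'' must be read as ``the image is finite index,'' but this is not an additional assumption: it is exactly what the lemma is asserting, and your argument never establishes it. The hypothesis you fail to exploit is that $f$ is a map of \emph{pairs}, sending $\partial\Sigma$ into $\partial\Sigma'$. Without this the lemma is simply false (any incompressible proper subsurface gives a counterexample), and your Scott-core argument correctly detects that a $\pi_1$-injection between free groups can have infinite-index image. What you need, and never supply, is the step: because $f(\partial\Sigma)\subset\partial\Sigma'$ and the homotopy equivalence $\Sigma\to\Sigma''$ between compact surfaces may be taken to match peripheral structures, every boundary curve of the core $\Sigma''$ is peripheral in $\Sigma'$; this is what forces finite index. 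Your ``annulus-absorption'' paragraph gestures at the complementary pieces but never invokes the boundary hypothesis to rule out non-annular ones.

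The paper avoids this bookkeeping with a different device: it doubles $\Sigma$ and $\Sigma'$ along their boundaries to obtain closed surfaces $D\Sigma$, $D\Sigma'$, notes that the boundary hypothesis lets $f$ extend to $F:D\Sigma\to D\Sigma'$, and checks (via the Bass--Serre trees of the doubling splittings, using malnormality of the peripheral subgroups) that $F$ is still $\pi_1$-injective. The finite-index conclusion is then immediate, since an infinite-index subgroup of a closed surface group is free and cannot contain $\pi_1(D\Sigma)$. Restricting the resulting covering map back over $\Sigma'\subset D\Sigma'$ finishes. Your core-subsurface approach can be repaired along the lines above, but the doubling trick is what makes the paper's proof short and is worth knowing.
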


   \begin{proof}
    It suffices to prove that $f_* : \pi_1 (\Sigma) \rightarrow \pi_1 (\Sigma')$ embeds $\pi_1 (\Sigma)$ as a finite-index subgroup of $\pi_1(\Sigma')$.
    Let $D\Sigma$ and $D\Sigma'$ be the doubles of $\Sigma$ and $\Sigma'$ along their boundary components.
    The spaces $D\Sigma$ and $D\Sigma'$ are closed surfaces of genus greater than $1$, and the map $f$ determines a $\pi_1$-injective map $F : D\Sigma \rightarrow D\Sigma'$.    
    This last fact follows since $\Sigma$ and $\Sigma'$ have negative Euler characteristic so the subgroups corresponding to the boundary components give malnormal families.
    Thus, the lift $\tilde{f}: \widetilde{\Sigma} \rightarrow \widetilde{\Sigma}'$ sends each boundary component to a unique boundary component.
    If $T$ and $T'$ are the Bass-Serre trees corresponding to the splittings of $\pi_1 (D\Sigma)$ and $\pi_1(D \Sigma')$ given by the doubling, then it follows from the previous observation that the $F_*$-equivariant map $T \rightarrow T'$ induced by the morphism between the graphs of groups is locally injective, and hence globally injective. Thus, $F_*:\pi_1(D\Sigma) \rightarrow \pi_1(D\Sigma')$ is injective.
    
    The fundamental group of a closed surface $D\Sigma$ can only embed in the fundamental group of a closed surface $D\Sigma'$ as a finite-index subgroup, so the map $F$ is homotopic to a finite-degree covering map $F'$, which induces that same map on the fundamental groups.
    The preimage $(F')^{-1}(\Sigma')$ may not be $\Sigma$, but it will be a subspace homeomorphic to $\Sigma$, so we can identify $(F')^{-1}(\Sigma')$ with $\Sigma'$, to obtain a covering map $f': (\Sigma, \partial \Sigma) \rightarrow (\Sigma', \partial \Sigma')$ such that $f_* = f'_*$, so $f'$ is homotopic to $f$.
   \end{proof}
  
   \begin{example} \label{ex:sc} (Commensurably coHopfian.)
    Let $M$ be a closed hyperbolic $3$-manifold, and let $\gamma$ be an embedded locally geodesic closed curve in $M$.  Let $\Sigma$ be a compact surface with positive genus and boundary $\partial \Sigma$ homeomorphic to $S^1$. Identify $\gamma$ with $\partial \Sigma$ via a homeomorphism to obtain a quotient space $X$. The fundamental group $G = \pi_1(X)$ is a one-ended hyperbolic group given by the amalgamation of the $3$-manifold group and the free group $\pi_1(\Sigma)$ along the cyclic groups corresponding to $\gamma$ and $\partial \Sigma$. This amalgamated free product corresponds to the canonical JSJ decomposition for $\pi_1(X)$.

  \begin{prop} \label{claim:ManifoldAndSurface}
   The group $G$ is commensurably coHopfian.
  \end{prop}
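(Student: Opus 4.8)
We give a topological proof. Write $G = \pi_1(M) *_C \pi_1(\Sigma)$ with $C = \langle \gamma\rangle = \langle \partial\Sigma\rangle \cong \Z$; by hypothesis this amalgam is the JSJ decomposition of $G$. The subgroup $C$ is malnormal in $\pi_1(M)$ (the geodesic $\gamma$ is acylindrical) and in $\pi_1(\Sigma)$ (an essential simple closed curve generates a maximal, hence malnormal, cyclic subgroup of the free group $\pi_1(\Sigma)$), so $G$ acts on the Bass--Serre tree $T$ of the splitting with trivial stabiliser of every length-two path. Let $\phi\colon G'\to G$ be an injection from a subgroup of index $n$, and realise $G'$ as $\pi_1(\widehat X)$ for a connected degree-$n$ cover $\widehat X\to X$. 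By Lemma~\ref{lem:surfaceCovers}, $\widehat X$ is a graph of spaces whose vertex spaces are finite covers $M_1,\dots,M_r$ of $M$ (closed hyperbolic $3$-manifolds, with $\sum_i\deg(M_i\to M)=n$) and finite covers $\Sigma_1,\dots,\Sigma_s$ of $\Sigma$ (compact surfaces with non-empty boundary), whose edge spaces are circles, and in which no $M_i$ is adjacent to any $M_j$; this is the JSJ decomposition of $G'$, and in particular $G'$ is one-ended. Since $X$ is aspherical, fix $f\colon \widehat X\to X$ with $f_*=\phi$.

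\emph{Step 1 and Step 2: the three-manifold pieces map to distinct three-manifold pieces with finite index.} Each $\pi_1(M_i)$ is a closed hyperbolic $3$-manifold group, hence freely indecomposable and with no splitting over a virtually cyclic subgroup; therefore $\phi(\pi_1(M_i))$ is elliptic in $T$. It is not conjugate into the free group $\pi_1(\Sigma)$, since a $\mathrm{PD}(3)$ group is not free, so $\phi(\pi_1(M_i))\leqslant g_i\pi_1(M)g_i^{-1}$ for some $g_i\in G$; after replacing $\phi$ by a conjugate we may take $g_1=1$. As $\phi(\pi_1(M_i))$ is a $\mathrm{PD}(3)$ group it has cohomological dimension $3$, so Strebel's theorem forces it to have finite index in $g_i\pi_1(M)g_i^{-1}$, and by Mostow rigidity this index equals $\deg(M_i\to M)$; consequently $f|_{M_i}$ lifts to a finite cover of $X$ and is homotopic to a finite-degree covering map $M_i\to M$, which we assume from now on. Moreover the conjugates $g_i\pi_1(M)g_i^{-1}$ are pairwise distinct: if $g_i\pi_1(M)g_i^{-1}=g_j\pi_1(M)g_j^{-1}$ with $i\neq j$, then $\phi(\pi_1(M_i))\cap\phi(\pi_1(M_j))$, hence $\pi_1(M_i)\cap\pi_1(M_j)$, would be infinite; but the vertices of $M_i$ and $M_j$ in the JSJ tree of $G'$ are distinct and non-adjacent, so the segment between them crosses a surface vertex and two distinct edges, and its stabiliser lies in the intersection of two distinct maximal cyclic subgroups of a free group, hence is trivial.

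\emph{Step 3 (the crux): normal form and an Euler characteristic count.} Homotope $f$, keeping it a covering on each $M_i$, so that it is transverse to $\gamma$ and the number of components of $f^{-1}(\gamma)$ is minimal; then $f^{-1}(\gamma)$ is a disjoint union of circles cutting $\widehat X$ into pieces, each mapped into $M$ or into $\Sigma$. A piece $Q$ mapped into $\Sigma$ is a subsurface of some $\Sigma_l$, is $\pi_1$-injective into $\Sigma$, and so by Lemma~\ref{lem:surfaceMapsRelativeToBoundary} is homotopic rel boundary to a finite cover of $\Sigma$. A piece $P$ mapped into $M$ is either some $M_i$ (with $\chi(P)=0$) or a subsurface of some $\Sigma_l$, $\pi_1$-injective into $M$ with boundary on $\gamma$ together with closed geodesics $f(\partial\Sigma_l)$; using that $\gamma$ is acylindrical in $M$ --- so any $\pi_1$-injective annulus with boundary on $\gamma$ is relatively homotopic into $\gamma$ --- together with the minimality of $f^{-1}(\gamma)$, each such $P$ must be an annulus or an inessential disk, so after absorbing boundary-parallel annuli into the adjacent $M_i$ and discarding trivial disks we may assume $f$ maps every $\Sigma_l$ into $\Sigma$. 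Then $\widehat X=\bigl(\bigsqcup_i M_i\bigr)\cup\bigl(\bigsqcup_l\Sigma_l\bigr)$ with $f$ restricting to a finite cover on each piece; since $\chi(\widehat X)=n\chi(\Sigma)$ while $\chi(\widehat X)=\sum_i\chi(M_i)+\sum_l\chi(\Sigma_l)=\sum_l\deg(\Sigma_l\to\Sigma)\chi(\Sigma)$, the degrees $\deg(\Sigma_l\to\Sigma)$ sum to $n$. Matching degrees along $\gamma$, the covers $\{M_i\to M\}$ and $\{\Sigma_l\to\Sigma\}$ assemble to exhibit $f$ as homotopic to a degree-$n$ covering map $\widehat X\to X$; hence $\phi(G')=f_*\pi_1(\widehat X)$ has index $n$ in $G$, and $G$ is commensurably coHopfian.

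The main obstacle is the claim in Step 3 that a surface vertex group $\pi_1(\Sigma_l)$ cannot ``waste'' Euler characteristic by mapping a negative-Euler-characteristic subsurface essentially into $M$: this is exactly where the rigidity of the hyperbolic $3$-manifold vertex, via the acylindricity of $\gamma$ in $M$, is indispensable, and it is precisely the feature absent for the simple surface amalgams of Section~\ref{sec:ex}, whose vertex groups are all flexible surface groups. I expect the delicate points to be choosing $f$ in efficient (taut) position so that $f^{-1}(\gamma)$ has no compressible or boundary-parallel circles, arguing $\pi_1$-injectivity of the resulting pieces, and handling the case in which some $f(\partial\Sigma_l)$ happens to coincide with $\gamma$.
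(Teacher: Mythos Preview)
Your setup and Steps 1--2 follow the paper closely, but Step 3 contains a real gap. You assert that every $M$-type piece $P$ --- a component of $\Sigma_l \setminus f^{-1}(\gamma)$ that $f$ sends into $M$ --- ``must be an annulus or an inessential disk,'' citing acylindricity of $\gamma$ and minimality of $f^{-1}(\gamma)$. But acylindricity governs only $\pi_1$-injective \emph{annuli} with boundary on $\gamma$; it says nothing about surfaces of negative Euler characteristic. Closed hyperbolic $3$-manifolds contain plenty of $\pi_1$-injective surfaces with $\chi<0$ and boundary on a given geodesic (e.g.\ cut a fiber of a fibered $M$ along a simple closed geodesic lying on the fiber). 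Minimality of $f^{-1}(\gamma)$ together with acylindricity and incompressibility lets you \emph{remove} annular and disk $M$-type pieces; it does not force the surviving $M$-type pieces to be annuli or disks --- your implication points the wrong way. So you cannot conclude that $f$ carries each $\Sigma_l$ into $\Sigma$, and your subsequent Euler-characteristic bookkeeping collapses.

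The paper does \emph{not} try to eliminate $M$-type pieces of negative Euler characteristic. Two further ingredients replace your missing step. First (Claim~\ref{claim:SigmaType2}), a cohomological-dimension argument via Strebel shows an $M$-type piece cannot meet $\partial\Sigma_l$: otherwise $P \cup M_i$ (for the adjacent $M_i$) would map $\pi_1$-injectively into $\pi_1(M)$ while being infinite-ended of cohomological dimension $3$, impossible inside a $\mathrm{PD}(3)$ group. Second --- and this is the essential use of rigidity beyond mere acylindricity --- Mostow rigidity forces the degree of each $M_i\to M$ under $\Phi$ to equal its degree under the structural cover $\pi$ (both equal the volume ratio), and hence each boundary curve of $\Sigma_l$ covers $\gamma$ with the \emph{same} degree $d$ under $\Phi$ as under $\pi$ (both equal the geodesic-length ratio). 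One then compares: from $\pi$ one has $|\chi(\Sigma_l)| = d\,|\partial\Sigma_l|\cdot|\chi(\Sigma)|$, while summing $|\chi(\sigma_i)|$ over the pieces of $\Sigma_l$, discarding the (now known to exist) $M$-type ones and applying Lemma~\ref{lem:surfaceMapsRelativeToBoundary} to the $\Sigma$-type ones, gives strictly more than $d\,|\partial\Sigma_l|\cdot|\chi(\Sigma)|$, a contradiction. Your global identity $\sum_l d_l = n$ never accesses this per-boundary-curve degree matching, so it cannot detect or exclude the $M$-type pieces.
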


  \begin{proof}
   Let $G' \leqslant G$ be a finite-index subgroup and $\varphi: G' \rightarrow G$ an injective homomorphism.
   Without loss of generality, assume that $G'$ is a normal subgroup of $G$ and let $\pi:X' \rightarrow X$ denote the corresponding finite regular cover.
   Take the $\pi$-preimages of $M$ and $\Sigma$ to decompose $X'$ as a collection of homeomorphic $3$-manifolds $M_1', \ldots, M_n'$ and a collection of homeomorphic surfaces with boundary $\Sigma_1', \ldots, \Sigma_m'$ such that $M_i' \rightarrow M$ and $\Sigma_j' \rightarrow \Sigma$ are regular covers.
   
   We first argue that the homomorphism $\varphi: G' \rightarrow G$ is induced by a map $\Phi: X' \rightarrow X$ such that the restriction of $\Phi$ to each $3$-manifold $M_i'$ is a covering map $\Phi_i: M_i' \rightarrow M$.   
   Let $T$ denote the Bass-Serre tree of the JSJ-splitting of $G$. The subgroup $\varphi(G')\leq G$ acts on $T$. 
   Since the group $\pi_1(M_i')$ does not split over a virtually cyclic subgroup, the  subgroup $\varphi(\pi_1(M_i'))$ stabilizes a vertex in $T$. 
   Thus, there exists $g_i \in G$, such that $\varphi(\pi_1(M_i'))^{g_i} \leq \pi_1(M)$. 
   As $\pi_1(M)$ is commensurably coHopfian, $\varphi(\pi_1(M_i'))^{g_i}$ is a finite-index subgroup of $\pi_1(M)$. 
   By Mostow rigidity~\cite{Mostow68}, the covering space corresponding the subgroup $\varphi(\pi_1(M_i'))^{g_i}$ is isometric to $M_i'$, so the homomorphism $\varphi^{g_i}: \pi_1(M_i') \rightarrow \pi_1(M)$ is induced by a covering map $\Phi_i: M_i' \rightarrow M$.  
   Since $X$ and $X'$ have contractible universal covers,  there exists a continuous map $\Psi : X' \rightarrow X$ such that $\Psi_* = \varphi$.
   As $\varphi^{g_i}$ induces the same map on the fundamental group as $\Phi_i$ and the space $X$ is a classifying space, we can homotope $\Psi$ to a map $\Phi$ so that it restricts to $\Phi_i$ on each $M_i' \subseteq X'$ (see~\cite[Proposition 1B.9.]{HatcherAT}).
   Thus, the resulting map $\Phi$ is as specified.
    
    Suppose towards a contradiction that $\Phi:X' \rightarrow X$ is not homotopic to a covering map.
    Let $C_j \subset \Phi^{-1}(\gamma)$ be the set of curves in the full preimage of the amalgamating curve $\gamma \subset X$ that lie in the surface $\Sigma_j'$. 
    After homotopy, we may assume that $C_j$ is a set of disjoint curves and $\partial \Sigma_j' \subseteq C_j$.
    Moreover, since the curves $\gamma \subset M$ and $\partial\Sigma \subset \Sigma$ are acylindrical subspaces, applying a suitable homotopy removes parallel curves in the set $C_j$.
    Lemma~\ref{lem:surfaceMapsRelativeToBoundary} implies that if $C_j = \partial \Sigma_j'$, then $\Phi$ can be homotoped on $\Sigma_j'$, relative to $\partial \Sigma_j'$, to a covering map $\Sigma_j' \rightarrow \Sigma$.
    Since we assumed that $\Phi$ is not homotopic to a covering map we can say, without loss of generality, the set $C_1$ contains a curve that is not a component in $\partial\Sigma_1'$.
    
    Let $\sigma_1, \ldots, \sigma_\ell$ denote the closures of the components of $\Sigma_1' - C_1$.
    Each $\sigma_i$ is mapped by $\Phi$ into either $\Sigma$ or $M$ and we refer to the subsurfaces as either \emph{$\Sigma$-type} or \emph{$M$-type}, accordingly.
    We verify the following claims:
    
    \begin{claim} \label{claim:SigmaType1}
     If $\sigma_i$ is $\Sigma$-type and $\sigma_i \cap \sigma_j \neq \emptyset$, for $j \neq i$ then $\sigma_j$ is $M$-type.
    \end{claim}
    
    \begin{proof}
    Suppose not. Then we would have a $\pi_1$-injective map $f: (\sigma_i \cup \sigma_j, \partial(\sigma_i \cup \sigma_j)) \rightarrow (\Sigma, \partial \Sigma)$.
    By Lemma~\ref{lem:surfaceMapsRelativeToBoundary} the map $f$ is homotopopic to a covering map, which implies that the curves $\sigma_1 \cap \sigma_2$ would have to be homotopic to a boundary curve in $\sigma_1 \cup \sigma_2$, contradicting the fact that $\sigma_i$ and $\sigma_j$ are not annuli.
    \end{proof}
    
    \begin{claim} \label{claim:SigmaType2}
     If $\sigma_i$ has a boundary component in $\partial \Sigma_1'$, then $\sigma_i$ is $\Sigma$-type.
    \end{claim}

    \begin{proof}
    Suppose not. Then we have $\sigma_i$ that intersects $M_j'$ such that $\sigma_i\cup M_j'$ maps $\pi_1$-injectively into $M$.
    But $\sigma_i \cup M_j'$ has cohomological dimension 3 while on the other hand the fundamental group  will be infinite ended so it cannot induce a finite index embedding into $\pi_1 M$.
    By Strebel's theorem~\cite{Strebel77}, such a map cannot exist.
    \end{proof}
    
    Combining Claims~\ref{claim:SigmaType1} and~\ref{claim:SigmaType2} with the assumption that $C_1'$ contains a curve not in $\partial \Sigma_1'$ we can deduce that there exists at least some $\sigma_j$ that is $M$-type.
    Moreover, for each $\Sigma$-type $\sigma_i$, since $\Phi$ induces a $\pi_1$-injective map $(\sigma_i, \partial \sigma_i) \rightarrow (\Sigma, \partial \Sigma)$.
    Lemma~\ref{lem:surfaceMapsRelativeToBoundary} implies we can homotope, relative to its boundary curves, $\sigma_i \rightarrow \Sigma$ to a covering map.

    
    Under the regular covering map $\pi:X' \rightarrow X$ corresponding to the finite-index subgroup $G' \leq G$, each boundary component in $\partial \Sigma_j'$ covers $\partial \Sigma$ with degree $d$ for some $d \in \N$. Thus, the Euler characteristic satisfies $$\chi(\Sigma_j') = d | \partial \Sigma_j' | \cdot \chi( \Sigma).$$  
    
    The degrees of the covering maps from $M_i'$ to $M$ given by either $\pi$ or $\Phi$ must coincide since they are determined by the ratio of the volumes.
    Similarly, if $\gamma'$ is a curve in $\partial \Sigma_i'$, then since $\gamma'$ and $\gamma$ are geodesic curves in some $M_j'$ and $M$ respectively, the degrees of the covering map $\gamma' \rightarrow \gamma$ given by $\pi$ and $\Phi$ must also coincide since it is given by the ratios of their length. It follows that each component in $\partial \Sigma_i'$ covers $\gamma$ with degree $d$.
    

    Thus, we can deduce that 
    
      \begin{align*}
      |\chi(\Sigma_1')| & = \sum_i |\chi(\sigma_i)|  
			 > \sum_{\sigma_i\textrm{ is } \textrm{$\Sigma$-type}} |\chi(\sigma_i)| \\
                       & = \sum_{\sigma_i\textrm{ is } \textrm{$\Sigma$-type}} \deg(\Phi: \partial \sigma_i \rightarrow \gamma)|\chi(\Sigma)|  
                        > d|\partial \Sigma_1'| \cdot |\chi(\Sigma)|.
      \end{align*}
   
   (We let $\deg(\Phi: \partial \sigma_i \rightarrow \gamma$) denote the sum of the degrees of the map restricted to each component in $\partial \sigma_i$.) 
   The first inequality follows from discarding the $M$-type surfaces $\sigma_i$, and the second inequality follows from only counting the degrees of the curves in $\partial \Sigma_1'$. This contradicts the previous equality, and thus, $\Phi$ is homotopic to a covering map. Therefore, $\varphi(G')$ is a finite-index subgroup of $G$. 
  \end{proof}

  \end{example}

  \section{Commensurably coHopfian groups without hanging Fuchsian subgroups in their JSJ decomposition} \label{sec:mixed2}
   In this section we provide an example of a one-ended hyperbolic group with non-trivial JSJ decomposition and only rigid and two-ended vertex groups.
   The key point is that we choose the rigid vertex groups to be commensurably coHopfian.

   \begin{example} \label{prop:rsc}
   Let $M$ and $N$ be closed hyperbolic $3$-manifolds.
   For simplicity we will assume that $\pi_1(M)$ and $\pi_1(N)$ are incommensurable.
   Let $\gamma \subseteq M$ and $\sigma \subseteq N$ be simple closed geodesics, and let $A$ be an annulus.
   Let $X$ be the space obtained from $M \sqcup A \sqcup N$ by gluing one boundary component of the annulus to $\gamma$ and the other to $\sigma$.

  \begin{claim} 
   $G = \pi_1 X$ is commensurably coHopfian.
  \end{claim}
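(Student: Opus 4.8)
The plan is to mimic the proof of Proposition~\ref{claim:ManifoldAndSurface} from Example~\ref{ex:sc}, with two differences: $X$ now has no hanging surface vertex, so no Euler characteristic bookkeeping is needed, but the two rigid pieces have incommensurable fundamental groups, and this incommensurability will do most of the work. First I would reduce to the case $G'\trianglelefteq G$ of finite index, with corresponding finite regular cover $\pi:X'\to X$, and decompose $X'$ along $\pi^{-1}(M)$, $\pi^{-1}(N)$, $\pi^{-1}(A)$ into regular covers $M_1',\dots,M_a'\to M$, $N_1',\dots,N_b'\to N$ and annuli $A_1',\dots,A_c'\to A$; note $\sum_i\deg(M_i'\to M)=\sum_j\deg(N_j'\to N)=[G:G']=:D$. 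Given an injection $\varphi:G'\to G$, the group $\pi_1 M_i'$ is a closed hyperbolic $3$-manifold group, so it does not split over a virtually cyclic subgroup, and hence $\varphi(\pi_1 M_i')$ fixes a vertex of the Bass--Serre tree $T$ of the JSJ splitting of $G$, i.e.\ is conjugate into $\pi_1 M$ or $\pi_1 N$. It cannot be conjugate into $\pi_1 N$: that would embed the $\mathrm{PD}(3)$ group $\pi_1 M_i'$ into the $\mathrm{PD}(3)$ group $\pi_1 N$, which by Strebel's theorem~\cite{Strebel77} forces the image to have finite index, making $\pi_1 M$ and $\pi_1 N$ commensurable, contrary to hypothesis. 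So $\varphi(\pi_1 M_i')$ is conjugate into $\pi_1 M$, which is commensurably coHopfian (closed hyperbolic manifold group, so $\mathrm{PD}(3)$, so Strebel applies), so $\varphi(\pi_1 M_i')$ has finite index in $\pi_1 M$; Mostow rigidity~\cite{Mostow68} then shows the corresponding cover is isometric to $M_i'$, so $\varphi$ is induced on $\pi_1 M_i'$, up to conjugacy, by a covering map $\Phi_i:M_i'\to M$, and likewise one gets $\Psi_j:N_j'\to N$. Since $X$ is aspherical (its universal cover is a tree of copies of $\mathbb{H}^3$ glued along lines, hence contractible), there is a map $\Psi:X'\to X$ with $\Psi_*=\varphi$, and by~\cite[Proposition~1B.9]{HatcherAT} it can be homotoped to a map $\Phi$ restricting to $\Phi_i$ and $\Psi_j$ on the $3$-manifold pieces.

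Next I would examine the induced $\varphi$-equivariant map $f:T\to T$. No edge of $T$ is collapsed by $f$: an edge joins an $M$-vertex to an $N$-vertex, and collapsing it would force $\varphi(\pi_1 N_j')$ (or $\varphi(\pi_1 M_i')$) to fix a vertex of the wrong type, again producing a forbidden embedding of incommensurable $3$-manifold groups. Hence $\varphi$ carries each primitive edge group of the induced splitting of $G'$ into a conjugate of $\langle\gamma\rangle$, so after a further homotopy --- which, by asphericity of $X$, may be taken rel the $3$-manifold pieces --- $\Phi$ maps each annulus $A_k'$ into $A$ by a covering map. At this point $\Phi$ is a morphism of graphs of spaces covering each piece of $X$, and it remains to upgrade it to an honest covering of $X$. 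This is equivalent to the statement that each component of $\Phi_i^{-1}(\gamma)$ in each $M_i'$, and of $\Psi_j^{-1}(\sigma)$ in each $N_j'$, is a boundary curve of exactly one annulus $A_k'$ --- the matching of covering degrees along a shared curve being automatic. Granting this, $\Phi$ is a degree-$D$ covering of $X$, so $\varphi(G')=\Phi_*(\pi_1 X')$ has index $D$ in $G$, and $G$ is commensurably coHopfian.

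The main obstacle is the last point: ruling out an ``extra'' component $c$ of $\Phi_i^{-1}(\gamma)$ that bounds no annulus of $X'$. Such a $c$ would yield a maximal infinite cyclic subgroup $\langle z\rangle\le\pi_1 M_i'$, non-peripheral in $X'$, with $\varphi(z)$ conjugate in $\pi_1 M$ to a power of $\gamma$. I would exclude this using the acylindricity of the amalgamating curves: $\langle\gamma\rangle$ is malnormal in $\pi_1 M$ and $\langle\sigma\rangle$ in $\pi_1 N$ (simple closed geodesics in closed hyperbolic manifolds), so the action of $G$ on $T$ is acylindrical, while Mostow rigidity pins the covers $\Phi_i$ --- and hence the sets $\Phi_i^{-1}(\gamma)$ --- down to isometry. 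The goal is to show that the edge-folding-free equivariant map $f:T\to T$ is in fact surjective, so that $\varphi(G')\backslash T=G'\backslash T$ is finite and $[G:\varphi(G')]<\infty$; carrying this out means reconciling the combinatorics of the $\pi$-lifts of $\gamma,\sigma$ in the $3$-manifold pieces with those of the $\Phi_i$- and $\Psi_j$-lifts, which is the step requiring the most care.
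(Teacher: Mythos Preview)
Your proposal follows the paper's argument step for step: pass to a normal finite-index $G'$ and its regular cover $\pi\colon X'\to X$; use that $\pi_1 M_i'$ and $\pi_1 N_j'$ do not split over virtually cyclic subgroups, together with incommensurability and the commensurable coHopficity of closed hyperbolic $3$-manifold groups, to land each rigid vertex group in a finite-index subgroup of the correct factor; invoke Mostow rigidity to realize these embeddings by isometric covering maps $\Phi_i\colon M_i'\to M$ and $\Psi_j\colon N_j'\to N$; and finally use acylindricity of $\gamma\subset M$ and $\sigma\subset N$ to homotope the annuli into $A$. The paper compresses this last step into one sentence and declares $\Phi$ a covering map.

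The obstacle you isolate---a possible ``extra'' component $c\subset\Phi_i^{-1}(\gamma)$ not carrying an annulus of $X'$---is the right thing to check (and the paper does not address it explicitly either), but it dissolves without any further Bass--Serre tree analysis. Since $\Phi_i$ and $\pi|_{M_i'}$ are both local isometries, the degree with which a closed geodesic $c\subset M_i'$ covers $\gamma$ is $\mathrm{length}(c)/\mathrm{length}(\gamma)$, regardless of which of the two maps is used. You have already argued that every $\pi$-lift of $\gamma$ in $M_i'$ is sent by $\Phi_i$ onto $\gamma$; summing degrees over just these components gives
\[
\sum_{c\,\in\,\pi_0\bigl((\pi|_{M_i'})^{-1}(\gamma)\bigr)}\deg(\Phi_i|_c)
\;=\;\sum_{c}\frac{\mathrm{length}(c)}{\mathrm{length}(\gamma)}
\;=\;\deg(\pi|_{M_i'})
\;=\;\deg(\Phi_i),
\]
the last equality holding because both covering degrees equal $\mathrm{vol}(M_i')/\mathrm{vol}(M)$. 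The full degree of $\Phi_i$ over $\gamma$ is thus already exhausted by the $\pi$-lifts, so $\Phi_i^{-1}(\gamma)=(\pi|_{M_i'})^{-1}(\gamma)$ and there are no extra components; the identical count handles $\Psi_j^{-1}(\sigma)$. With this in hand $\Phi$ is an honest covering map of $X$, and there is no need to chase surjectivity of the tree map $f$.
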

   \begin{proof}
    The proof follows a similar strategy to Claim~\ref{claim:ManifoldAndSurface}.
    Let $G' \leq G$ be a finite-index subgroup and $\varphi: G' \rightarrow G$ is an injective homomorphism.
    Assuming $G'$ is a normal subgroup, let $X' \rightarrow X$ be the finite-sheeted, regular cover corresponding to $G'$.
    Considering the $\varphi$-preimages of $M, N,$ and $A$, decompose $X'$ as a graph of spaces with vertex spaces $M_{u_1}, \ldots, M_{u_m}$ and $N_{v_1}, \ldots, N_{v_n}$ and edge spaces $A_{e_1}, \ldots, A_{e_a}$.
    
    As $\pi_1 (M)$ and $\pi_1 (N)$ do not split over a virtually cyclic group, are commensurably coHopfian, and are incommensurable with each other, there exists $g_i, h_i \in G$ such that $\varphi(\pi_1 (M_{u_i}))^{g_i}$ is a finite-index subgroup of $\pi_1(M)$ and $\varphi(\pi_1 (N_{v_i}))^{h_i}$ is a finite-index subgroup of $\pi_1( N)$. By Mostow rigidity, there exist covering maps $\Phi_{u_i} : M_{u_i} \rightarrow M$ and $\Phi_{v_i} : N_{v_i} \rightarrow N$ that correspond to the embeddings $\varphi^{g_i}: \pi_1(M_{u_i}) \rightarrow \pi_1(M)$ and $\varphi^{h_i}: \pi_1(N_{v_i}) \rightarrow \pi_1(N)$.
    
    The spaces $X'$ and $X$ have contractible universal covers, so there exists a continuous map $\Phi: X' \rightarrow X$ such that $\Phi_* = \varphi$.
    After homotopy, $\Phi$ restricts to $\Phi_{u_i}$ on $M_{u_i}$ and $\Phi_{v_i}$ on $N_{v_i}$.
    The map $\Phi$ may be homotoped to a covering map since any annulus mapping $(A, \partial)$ to either $(M, \gamma)$ or $(N,\sigma)$ can be homotoped into $\gamma$ or $\sigma$, since $\gamma$ and $\sigma$ are acylindrical subspaces of $M$ and $N$.
    Thus, $\varphi(G')$ is a finite-index subgroup of $G$.
   \end{proof}
  \end{example}

\bibliographystyle{alpha}
\bibliography{refs}

\end{document}